\newcommand{\sub}{\subseteq}
\newcommand{\Nat}{\mathbb{N}}
\def\cG{{{\mathcal G}}}
\def\cS{{{\mathcal S}}}
\def\cM{{{\mathcal M}}}
\def\N{\mathbb N}
\def\epsilon{\varepsilon}
\newtheorem{theo}{Theorem}[section]
\newtheorem{lem}[theo]{Lemma}
\newtheorem{pro}[theo]{Proposition}
\newtheorem{cor}[theo]{Corollary}
\newtheorem{defi}[theo]{Definition}
\newtheorem{fact}[theo]{Fact}
\newtheorem{rem}[theo]{Remark}
\newtheorem{problem}[theo]{Problem}
\numberwithin{equation}{section}
\title[Convex combinations of weak$^*$-convergent sequences]{Convex combinations of weak$^*$-convergent sequences and the Mackey topology}
\author{Antonio Avil\'{e}s}
\address{Departamento de Matem\'{a}ticas\\ Facultad de Matem\'{a}ticas\\ Universidad de Murcia\\ 30100 Espinardo (Murcia)\\ Spain} 
\email{avileslo@um.es}
\author{Jos\'{e} Rodr\'{i}guez}
\address{Departamento de Matem\'{a}tica Aplicada\\ Facultad de Inform\'{a}tica\\ Universidad de Murcia\\ 30100 Espinardo (Murcia)\\ Spain} 
\email{joserr@um.es}
\date{\today}
\subjclass[2000]{46B26,46B50}
\keywords{Mackey topology, convex block subsequence, property (K), strongly weakly compactly generated space, Grothendieck space}
\thanks{Research partially supported by {\em Ministerio de Econom\'{i}a y Competitividad} and {\em FEDER} (project MTM2014-54182-P).
This work was also partially supported by the research project 19275/PI/14 funded by {\em Fundaci\'{o}n S\'{e}neca - Agencia de Ciencia y Tecnolog\'{i}a 
de la Regi\'{o}n de Murcia} within the framework of {\em PCTIRM 2011-2014}.}
\begin{document}

\begin{abstract}
A Banach space $X$ is said to have property~(K) if every $w^*$-convergent sequence in~$X^*$
admits a convex block subsequence which converges with respect to the Mackey topology.
We study the connection of this property with strongly weakly compactly generated
Banach spaces and its stability under subspaces, quotients and $\ell^p$-sums. 
We extend a result of Frankiewicz and Plebanek by proving that property~(K) is preserved by $\ell^1$-sums 
of less than $\mathfrak{p}$ summands. Without any cardinality restriction, we show that 
property~(K) is stable under $\ell^p$-sums for $1<p<\infty$.
\end{abstract}

\maketitle

\section{Introduction}

A classical result by Mazur ensures that if $(x_n)$ is a weakly convergent sequence
in a Banach space, then there is a {\em convex block subsequence} of~$(x_n)$ which is norm convergent.
By a convex block subsequence we mean a sequence $(y_k)$ of the form
$$
	y_k=\sum_{n\in I_k}a_n x_n
$$
where $(I_k)$ is a sequence of finite subsets of~$\N$ with $\max(I_k) < \min(I_{k+1})$ and $(a_n)$ is a sequence
of non-negative real numbers such that $\sum_{n\in I_k}a_n=1$ for all~$k \in \N$. 
Mazur's theorem can be deduced from a general principle asserting that 
if $E$ is a locally convex space (with topological dual~$E'$) and $\mathcal{T}$ is any locally convex topology on~$E$ which is
compatible with the dual pair~$\langle E,E' \rangle$ (meaning that $(E,\mathcal{T})'=E'$), then
\begin{equation}\label{eqn:convex}
	\overline{C}=\overline{C}^{\, \mathcal{T}}
	\quad\mbox{for any convex set }C\sub E.
\end{equation}

If $X$ is a Banach space, then $(X^*,w^*)'=X$ and the strongest locally convex topology on~$X^*$
which is compatible with the dual pair $\langle X^*,X \rangle$ is the Mackey topology $\mu(X^*,X)$, i.e.
the topology of uniform convergence on weakly compact subsets of~$X$. When applied to this setting, \eqref{eqn:convex} reads as
\begin{equation}\label{eqn:convexMackey}
	\overline{C}^{\, w^*}=\overline{C}^{\, \mu(X^*,X)}
	\quad\mbox{for any convex set }C\sub X^*.
\end{equation}
If one assumes that $(B_{X^*},\mu(X^*,X))$ is metrizable, then~\eqref{eqn:convexMackey} allows to
conclude that any $w^*$-convergent sequence in~$X^*$ admits a convex block subsequence which is $\mu(X^*,X)$-convergent.
This is the keystone of our paper:

\begin{defi}\label{defi:PropertyK}
A Banach space~$X$ has {\em property~(K)} if every $w^*$-convergent sequence in~$X^*$
admits a convex block subsequence which is $\mu(X^*,X)$-convergent.
\end{defi}

Property~(K) was invented by Kwapie\'{n} to provide an alternative approach to some results of Kalton and Pe\l czy\'{n}ski~\cite{kal-pel} 
on subspaces of~$L^1[0,1]$. A weakening of property~(K) was used by Figiel, Johnson and Pe\l czy\'{n}ski~\cite{fig-alt2} 
to show that, in general, the separable complementation property is not inherited by subspaces. 
It is not difficult to check that 
the spaces $c_0$ and $C[0,1]$ fail property~(K). On the other hand, the basic examples of Banach spaces~$X$ having property~(K) are:
\begin{itemize}
\item {\em Schur spaces}, i.e. spaces such that weak convergent sequences in~$X$ are norm convergent. In this case, 
the topologies $w^*$ and $\mu(X^*,X)$ agree on bounded subsets of~$X^*$.
\item {\em Grothendieck spaces}, i.e. spaces such that $w^*$-convergent sequences in~$X^*$ are weakly convergent, like reflexive spaces and $\ell^\infty$.
In this case, Mazur's theorem ensures that $X$ has property~(K).
\item {\em Strongly weakly compactly generated (SWCG) spaces}, i.e.
those for which there exists a weakly compact set $G \sub X$ such that
for every weakly compact set $L \sub X$ and every $\varepsilon>0$ there is $n\in \N$ such that $L \sub nG + \varepsilon B_X$.
These are precisely the spaces for which $(B_{X^*},\mu(X^*,X))$ is metrizable,~\cite{sch-whe}. A typical
example is $L^1(\mu)$ for any finite measure~$\mu$.
\end{itemize}

In this paper we discuss some features of property~(K). We point out
that this property is equivalent to reflexivity for Banach spaces not containing subspaces isomorphic to~$\ell^1$
(Theorem~\ref{theo:no-ell1}). We discuss its stability under subspaces and quotients. The three-space problem
for property~(K) is considered and we prove that a Banach space~$X$ has property~(K) whenever
there is a subspace~$Y \sub X$ such that $Y$ is Grothendieck and $X/Y$ has property~(K) (Theorem~\ref{theo:3space}).
On the other hand, we extend the fact that SWCG Banach spaces
have property~(K) by proving that the same holds for Banach spaces which are strongly generated by less than~$\mathfrak{p}$
weakly compact sets (Theorem~\ref{theo:K-p}). Two different proofs of this result are given. One of them
is based on a diagonal argument by Haydon, Levy and Odell~\cite{hay-lev}, which is also
used to show that property~(K) is preserved by $\ell^1$-sums of less than $\mathfrak{p}$ summands
(Theorem~\ref{theo:l1sums}). This is an improvement of a result by Frankiewicz and Plebanek~\cite{fra-ple},
who proved the same statement under the additional assumption that each summand is either $L^1(\mu)$ for some finite measure~$\mu$
or a Grothendieck space. We stress that Pe\l czy\'{n}ski (unpublished, 1996) proved that the $\ell^1$-sum of $\mathfrak{c}$ copies of~$L^1[0,1]$ 
fails property~(K), see~\cite[Example~4.I]{fig-alt2}. As pointed out in~\cite{fra-ple}, a similar counterexample can 
also be constructed for the $\ell^1$-sum of $\mathfrak{b}$ copies of~$L^1[0,1]$. We finish the paper by proving that
property~(K) is preserved by arbitrary $\ell^p$-sums for any $1<p<\infty$ (Theorem~\ref{theo:lpsums}).

\subsection*{Terminology}

All our linear spaces are real. Given a sequence $(f_n)$ in a linear space, a {\em rational convex block subsequence} of~$(f_n)$
is a sequence $(g_k)$ of the form
$$
	g_k=\sum_{n\in I_k}a_n f_n
$$
where $(I_k)$ is a sequence of finite subsets of~$\N$ with $\max(I_k) < \min(I_{k+1})$ and $(a_n)$ is a sequence
of non-negative rational numbers such that $\sum_{n\in I_k}a_n=1$ for all~$k \in \N$. 
By a {\em subspace} of a Banach space we mean a closed linear subspace.
Given a Banach space~$Z$, its norm is denoted
by $\|\cdot\|_Z$ if needed explicitly. We write $B_Z=\{z\in Z: \|z\|_Z\leq 1\}$. The dual of~$Z$ is denoted by~$Z^*$
and the evaluation of $z^*\in Z^*$ at~$z\in Z$ is denoted by either $z^*(z)$ or $\langle z^*,z\rangle$.

We will use without explicit mention the following elementary facts: (i)~a 
Banach space~$X$ has property~(K) if and only if every $w^*$-convergent sequence in~$X^*$
admits a rational convex block subsequence which is $\mu(X^*,X)$-convergent; (ii)~in order to check 
whether a Banach space~$X$ has property~(K), it suffices to test on $w^*$-null sequences in~$B_{X^*}$ 
or $w^*$-null linearly independent sequences. 

The cardinality of a set $S$ is denoted by~$|S|$. 
The cardinal~$\mathfrak{p}$ is defined as the least cardinality of a family $\cM$ of infinite
subsets of~$\N$ having the following properties:
\begin{itemize}
\item $\bigcap \mathcal{N}$ is infinite for every finite subfamily $\mathcal{N} \sub \cM$.
\item There is no infinite set $A \sub \N$ such that $A \setminus M$ is finite for all $M\in \cM$.
\end{itemize}
It is known that $\omega_1\leq \mathfrak{p} \leq \mathfrak{c}$ and that
the equality $\mathfrak{p} =\mathfrak{c}$ holds under Martin's Axiom.
We refer the reader to~\cite{freMartin,dou2} for further information on~$\mathfrak{p}$.

\section{Results}\label{section:Results}

The dual ball $B_{X^*}$ of a Banach space~$X$ is said to be {\em $w^*$-convex block compact} if every 
sequence in~$B_{X^*}$ admits a $w^*$-convergent convex block subsequence. 
A result of Bourgain~\cite{bou:79} (cf. \cite[Proposition~11]{pfi-J} and~\cite{sch-3}) states that
$B_{X^*}$ is $w^*$-convex block compact whenever $X$ contains no subspace isomorphic to~$\ell^1$.
As an application we get the following:

\begin{theo}\label{theo:no-ell1}
Let $X$ be a Banach space not containing subspaces isomorphic to~$\ell^1$. Then 
$X$ is reflexive if and only if it has property~(K).
\end{theo}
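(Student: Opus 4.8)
The plan is to treat the two implications separately. The forward implication needs nothing new: if $X$ is reflexive then, as observed in the introduction, $X$ is a Grothendieck space and hence has property~(K). The substance is the converse, which I would prove in contrapositive form. First I would record the reduction that makes no-$\ell^1$ useful: since $X$ contains no copy of~$\ell^1$, Rosenthal's theorem guarantees that every bounded sequence in~$X$ has a weakly Cauchy subsequence, so by the Eberlein--\v{S}mulian theorem $X$ is reflexive precisely when it is weakly sequentially complete. Thus it suffices to show that a non-reflexive such~$X$ fails property~(K); equivalently, starting from a weakly Cauchy sequence $(z_i)\subseteq B_X$ with no weak limit in~$X$, whose $w^*$-limit is some $x^{**}\in X^{**}\setminus X$, I would construct a $w^*$-null sequence in~$X^*$ admitting no $\mu(X^*,X)$-convergent convex block subsequence.

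To build the offending sequence I would first pass to a difference sequence. Since $(z_i)$ is weakly Cauchy but not norm convergent, after relabelling the vectors $d_i:=z_{i+1}-z_i$ form a seminormalized, weakly null sequence, and by the Bessaga--Pe\l czy\'nski selection principle I may assume $(d_i)$ is basic; note that its partial sums $\sum_{i\le N}d_i=z_{N+1}-z_1$ remain bounded, which is exactly the trace of the non-reflexivity carried by~$x^{**}$. The set $L:=\{0\}\cup\{d_i:i\in\N\}$ is weakly compact, being a weakly null sequence together with its limit, so $L$ is available as a test set for the Mackey topology. Using biorthogonal-type functionals associated with $(d_i)$ I would produce a bounded, $w^*$-null sequence $(\phi_k)$ in~$X^*$ that pairs with~$L$ in a persistent way, and then invoke property~(K) to extract a $\mu(X^*,X)$-null convex block $(g_j)=\big(\sum_{k\in I_j}a_k\phi_k\big)$. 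Since $\mu$-convergence forces uniform convergence on~$L$, this gives $\sup_i|g_j(d_i)|\to 0$; the contradiction must then come from showing that the special structure of $(\phi_k)$ keeps $\sup_i|g_j(d_i)|$ bounded away from~$0$ for every convex block.

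The hard part will be this last reconciliation. The Mackey topology only ``sees'' weakly compact subsets of~$X$, and the witness~$x^{**}$ of non-reflexivity lies outside~$X$ and is therefore invisible to it; indeed property~(K) by itself cannot force reflexivity, as the Grothendieck space~$\ell^\infty$ shows, so the no-$\ell^1$ hypothesis must be used in an essential way. Concretely, the weakly null differences $d_i$ assemble into a genuinely weakly compact set~$L$ but individually appear to lose the oscillation of~$(z_i)$, which resides in the unbounded summation $\sum d_i$ rather than in any single difference. The crux is thus to transfer that oscillation onto~$L$: I would exploit Bourgain's theorem, the $w^*$-convex-block compactness of~$B_{X^*}$ available because $X$ omits~$\ell^1$, to pre-select the functionals $(\phi_k)$ so that their behaviour against the bounded partial sums $z_{N+1}-z_1$ survives arbitrary convex blocking, forcing $g_j(d_i)$ to stay large on~$L$ and contradicting $\mu$-nullity. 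Making this transfer precise---pinning down the functionals $(\phi_k)$ and verifying the lower bound on all convex blocks---is where I expect the real work to lie.
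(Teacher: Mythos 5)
Your forward implication is fine, and the reduction via Rosenthal's theorem to weak sequential completeness is correct. But the converse, which is the entire content of the theorem, is not proved: you explicitly defer the crucial step (``pinning down the functionals $(\phi_k)$ and verifying the lower bound on all convex blocks'') to future work, and the strategy you sketch for it cannot succeed as stated. You propose to fix the weakly compact set $L=\{0\}\cup\{d_i:i\in\N\}$ once and for all and to produce a $w^*$-null sequence $(\phi_k)$ such that \emph{every} convex block subsequence $(g_j)$ satisfies $\sup_i|g_j(d_i)|\not\to 0$. This is impossible: by the Davis--Figiel--Johnson--Pe{\l}czy\'{n}ski factorization (this is exactly Lemma~\ref{lem:DFJP} of the paper), for \emph{any} single weakly compact set $L$ and \emph{any} $w^*$-convergent sequence in $X^*$ there is a convex block subsequence converging uniformly on $L$. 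A failure of property~(K) can never be witnessed on one prescribed weakly compact set; the test set must vary with the convex block subsequence, as it does in the $c_0$ example in Section~1, where the weakly null vectors $x_k=\sum_{n\in I_k}e_n$ are built from the blocks $I_k$ themselves. So even the outline of your ``real work'' points in a direction that is provably blocked, and repairing it would require a genuinely adaptive construction that you have not indicated.

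For contrast, the paper avoids any such construction by arguing in the dual and in the ``if'' direction directly: given a sequence in $B_{X^*}$, Bourgain's theorem (valid because $X\not\supseteq\ell^1$) yields a $w^*$-convergent convex block subsequence, property~(K) upgrades a further convex block subsequence to $\mu(X^*,X)$-convergence, and the fact that the norm and $\mu(X^*,X)$ topologies agree sequentially on $X^*$ when $X$ contains no copy of $\ell^1$ turns this into norm convergence; a weak compactness criterion of Diestel--Ruess--Schachermayer then gives that $B_{X^*}$ is weakly compact, hence $X$ is reflexive. Note that this route uses the no-$\ell^1$ hypothesis twice (Bourgain's convex block compactness and the sequential agreement of the norm and Mackey topologies), which is precisely the leverage your sketch is missing.
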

\begin{proof} It only remains to prove the ``if'' part.
Assume that $X$ has property~(K). We will check that $B_{X^*}$ is weakly compact
by proving that any sequence in~$B_{X^*}$ admits a convex block subsequence which is norm convergent
(cf. \cite[Corollary~2.2]{die-alt2}). Let $(x_n^*)$ be a sequence in~$B_{X^*}$. Since $X$ contains no isomorphic copy of~$\ell^1$,
there is a $w^*$-convergent convex block subsequence $(y^*_k)$ of~$(x_n^*)$ (see the paragraph
preceding the theorem). By passing to a further convex block subsequence, not relabeled, 
we can assume that $(y^*_k)$ is $\mu(X^*,X)$-convergent. Then $(y^*_k)$ is norm convergent, because
the fact that $X$ contains no isomorphic copy of~$\ell^1$ is equivalent to saying that the norm topology 
and $\mu(X^*,X)$ agree sequentially on~$X^*$ (see e.g. \cite[Theorem~5.53]{fab-ultimo}). 
This shows that $B_{X^*}$ is weakly compact and so $X$ is reflexive.
\end{proof}

In general, property~(K) is not inherited by subspaces. For instance, $\ell^\infty$ has property~(K) 
(because it is Grothendieck) and $c_0$ does not (cf. \cite[Proposition~C]{kal-pel}). 
Indeed, let $(e_n^*)$ be the canonical basis of~$\ell^1=c_0^*$. Then $(e_n^*)$ is $w^*$-null
and does not admit any $\mu(\ell^1,c_0)$-null convex block subsequence. For if 
$y^*_k=\sum_{n\in I_k}a_n e_n^*$ is any convex block subsequence, then
$y^*_k(x_k)=1$ for every~$k\in \N$, where $(x_k)$ is the weakly null sequence in~$c_0$ defined by 
$x_k:=\sum_{n\in I_k}e_n$ (here $(e_n)$ denotes the canonical basis of~$c_0$).

Following~\cite{cas-gon-pap}, we say that a subspace~$Y$ of a Banach space~$X$ is {\em $w^*$-extensible} 
if every $w^*$-null sequence in~$Y^*$ admits a subsequence which can be extended to a $w^*$-null sequence in~$X^*$. 
Obviously, any complemented subspace is $w^*$-extensible. On the other hand, 
it is not difficult to check that if~$X$ is a Banach space such that $B_{X^*}$ is $w^*$-sequentially compact
(this holds, for instance, if $X$ is weakly compactly generated, see e.g. \cite[Theorem~13.20]{fab-ultimo}), 
then all subspaces of~$X$ are $w^*$-extensible (see \cite[Theorem~2.1]{wan-alt}). 

We omit the easy proof of the following result:

\begin{pro}\label{pro:extensible}
Let $X$ be a Banach space and $Y \sub X$ a $w^*$-extensible subspace. 
If $X$ has property~(K), then $Y$ has property~(K).
\end{pro}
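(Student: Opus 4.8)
The plan is to prove that property~(K) passes from $X$ to a $w^*$-extensible subspace $Y$ by transporting a problematic sequence from $Y^*$ up to $X^*$, applying property~(K) there, and pushing the resulting convex block subsequence back down via the restriction map. Concretely, let $q\colon X^* \to Y^*$ denote the restriction operator $q(x^*)=x^*|_Y$, which is $w^*$-to-$w^*$ continuous and surjective (by Hahn--Banach). First I would take a $w^*$-null sequence $(y_n^*)$ in~$B_{Y^*}$, which by the elementary fact~(ii) recorded in the terminology section suffices for checking property~(K). By $w^*$-extensibility, after passing to a subsequence (not relabeled) there exist $x_n^* \in X^*$ with $q(x_n^*)=y_n^*$ and such that $(x_n^*)$ is $w^*$-null in~$X^*$.

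Next I would apply property~(K) of~$X$ to the $w^*$-null sequence $(x_n^*)$, obtaining a convex block subsequence $z_k^*=\sum_{n\in I_k}a_n x_n^*$ that is $\mu(X^*,X)$-convergent; since $(x_n^*)$ is $w^*$-null and $\mu(X^*,X)$ is compatible with the dual pair, its limit is necessarily~$0$, so $(z_k^*)$ is $\mu(X^*,X)$-null. The natural candidate in~$Y^*$ is the corresponding convex block subsequence $w_k^*=\sum_{n\in I_k}a_n y_n^*=q(z_k^*)$, using linearity of~$q$. It remains to verify that $(w_k^*)$ is $\mu(Y^*,Y)$-null, i.e. converges to~$0$ uniformly on weakly compact subsets of~$Y$.

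The key observation making this work is that $q=j^*$, where $j\colon Y\hookrightarrow X$ is the inclusion, and $j$ maps weakly compact subsets of~$Y$ to weakly compact subsets of~$X$ (weak-to-weak continuity of the inclusion). Thus for any weakly compact $L\sub Y$, its image $j(L)$ is weakly compact in~$X$, and the pairing satisfies $\langle w_k^*, y\rangle = \langle q(z_k^*), y\rangle = \langle z_k^*, j(y)\rangle$ for $y\in Y$. Hence
\begin{equation*}
	\sup_{y\in L}|\langle w_k^*, y\rangle| = \sup_{y\in L}|\langle z_k^*, j(y)\rangle| = \sup_{x\in j(L)}|\langle z_k^*, x\rangle| \to 0,
\end{equation*}
because $(z_k^*)$ is $\mu(X^*,X)$-null and $j(L)$ is weakly compact in~$X$. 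Therefore $(w_k^*)$ is a $\mu(Y^*,Y)$-null convex block subsequence of~$(y_n^*)$, establishing property~(K) for~$Y$.

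I expect the only genuine subtlety to be the initial step of lifting $(y_n^*)$ to a $w^*$-null sequence in~$X^*$, which is exactly where the $w^*$-extensibility hypothesis is consumed; everything afterward is a routine transport of the convexity data through the linear, $w^*$-continuous restriction map together with the elementary fact that inclusions carry weakly compact sets to weakly compact sets. This is presumably why the authors describe the proof as easy and omit it.
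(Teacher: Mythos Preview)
Your argument is correct and is exactly the natural proof the authors presumably had in mind; note that the paper explicitly omits the proof of this proposition (``We omit the easy proof of the following result''), so there is nothing to compare against. Your closing remark already anticipates this: the only nontrivial ingredient is the lifting step supplied by $w^*$-extensibility, and the rest is the routine observation that the restriction map $q=j^*$ is linear, $w^*$-$w^*$ continuous, and that the inclusion $j$ carries weakly compact sets to weakly compact sets.
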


\begin{cor}\label{cor:separableK}
Let $X$ be an SWCG Banach space. Then any subspace of~$X$ has property~(K).
\end{cor}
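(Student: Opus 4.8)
The plan is to reduce the statement to Proposition~\ref{pro:extensible} by showing that, for an SWCG space~$X$, every subspace is $w^*$-extensible. Since $X$ itself has property~(K) (it is one of the basic examples listed in the introduction), once $w^*$-extensibility of an arbitrary subspace $Y\sub X$ is established, Proposition~\ref{pro:extensible} immediately transfers property~(K) from~$X$ to~$Y$. By the discussion preceding Proposition~\ref{pro:extensible}, $w^*$-extensibility of all subspaces follows from \cite[Theorem~2.1]{wan-alt} as soon as we know that $B_{X^*}$ is $w^*$-sequentially compact. So the real content is to verify this last property of the dual ball.

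First I would secure the $w^*$-sequential compactness of $B_{X^*}$ by observing that every SWCG space is WCG. Indeed, if $G\sub X$ is a weakly compact set witnessing the SWCG property, then applying the defining condition to the weakly compact singletons $L=\{x\}$ gives, for each $\epsilon>0$, an $n\in\N$ with $x\in nG+\epsilon B_X$; letting $\epsilon\to 0$ shows $x\in\overline{\bigcup_n nG}$, so the weakly compact set $G$ has dense linear span and $X$ is WCG. Since WCG spaces have $w^*$-sequentially compact dual ball (\cite[Theorem~13.20]{fab-ultimo}), the property of $B_{X^*}$ we need is in hand.

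With these ingredients assembled, the conclusion is a short chain: $X$ WCG gives $B_{X^*}$ $w^*$-sequentially compact, which by \cite[Theorem~2.1]{wan-alt} makes every subspace $Y\sub X$ $w^*$-extensible, and then Proposition~\ref{pro:extensible} applied to $X$ (which has property~(K)) yields property~(K) for~$Y$.

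I do not anticipate a genuine obstacle, as each step is a direct deduction from results already available. The only point requiring a moment's care is the passage to $w^*$-sequential compactness of~$B_{X^*}$: one should resist trying to extract it directly from the metrizability of $(B_{X^*},\mu(X^*,X))$ that characterizes SWCG spaces, since a finer metrizable topology on a $w^*$-compact ball carries no immediate information about $w^*$-convergent subsequences. Routing through the SWCG~$\impli$~WCG observation above is the clean way to obtain it.
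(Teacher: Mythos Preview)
Your proposal is correct and follows essentially the same route as the paper: the paper's proof simply observes that an SWCG space is WCG, hence every subspace is $w^*$-extensible by the remarks preceding Proposition~\ref{pro:extensible}, and then applies Proposition~\ref{pro:extensible} together with the fact that SWCG spaces have property~(K). You have merely added the (easy) justification of the implication SWCG~$\Rightarrow$~WCG, which the paper takes for granted.
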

\begin{proof}
Since $X$ is weakly compactly generated, any subspace of~$X$ is $w^*$-extensible
(see the comments preceding Proposition~\ref{pro:extensible}). 
The result now follows from Proposition~\ref{pro:extensible}
and the fact that any SWCG Banach space has property~(K).
\end{proof}

In particular, it follows that any subspace of~$L^1(\mu)$ (for a finite measure~$\mu$) has property~(K). We stress that 
there exist non-SWCG subspaces of~$L^1[0,1]$, like the one constructed by Mercourakis and Stamati in~\cite[Section~3]{mer-sta-2}
(cf. \cite[Theorem~7.5]{avi-ple-rod-5}).

\begin{cor}\label{cor:K-no-c0}
Let $X$ be a Banach space having property~(K). Then: 
\begin{enumerate}
\item[(i)] $X$ contains no complemented subspace isomorphic to~$c_0$.
\item[(ii)] $X$ contains no subspace isomorphic to~$c_0$ if $X$ has the separable complementation property.  
\end{enumerate}
\end{cor}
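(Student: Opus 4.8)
The plan is to reduce both parts to the single fact, established in the paragraph after Theorem~\ref{theo:no-ell1}, that $c_0$ fails property~(K), combined with Proposition~\ref{pro:extensible}. The preliminary observation I would record is that property~(K) is an isomorphic invariant: if $T\colon X \to Z$ is an isomorphism, then the adjoint $T^*\colon Z^* \to X^*$ is a $w^*$-to-$w^*$ homeomorphism which carries $\mu(Z^*,Z)$ onto $\mu(X^*,X)$ (because $T$ maps weakly compact subsets of $X$ onto weakly compact subsets of $Z$) and which commutes with the formation of convex block subsequences. Hence property~(K) passes between isomorphic spaces. In particular any Banach space isomorphic to~$c_0$ fails property~(K).

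For part~(i), I would argue by contradiction. Suppose $X$ had a complemented subspace $Y$ isomorphic to~$c_0$. As noted before Proposition~\ref{pro:extensible}, every complemented subspace is $w^*$-extensible, so Proposition~\ref{pro:extensible} forces $Y$ to inherit property~(K) from~$X$. By the isomorphic invariance just discussed, $c_0$ would then have property~(K), contradicting the example following Theorem~\ref{theo:no-ell1}.

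For part~(ii), assume $X$ has the separable complementation property and, towards a contradiction, that $X$ contains a subspace $Y \cong c_0$. Since $Y$ is separable, the separable complementation property yields a separable \emph{complemented} subspace $Z \sub X$ with $Y \sub Z$. Being complemented, $Z$ is $w^*$-extensible, so $Z$ has property~(K) by Proposition~\ref{pro:extensible}. Now the key move is to invoke Sobczyk's theorem: as $Z$ is separable and contains the copy $Y$ of~$c_0$, the subspace $Y$ is automatically complemented \emph{in}~$Z$. Thus $Z$ is a Banach space with property~(K) possessing a complemented subspace isomorphic to~$c_0$, which contradicts part~(i) applied to~$Z$.

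The only nonroutine ingredient is the appeal to Sobczyk's theorem, which is exactly what upgrades ``$Z$ contains a copy of~$c_0$'' to ``$Z$ contains a complemented copy of~$c_0$'' and thereby lets part~(i) do the work; everything else is bookkeeping with the isomorphic invariance of property~(K) and the $w^*$-extensibility of complemented subspaces.
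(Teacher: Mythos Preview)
Your proof is correct and follows essentially the same approach as the paper's: part~(i) via Proposition~\ref{pro:extensible} (complemented subspaces are $w^*$-extensible) together with the failure of property~(K) in~$c_0$, and part~(ii) by using the separable complementation property plus Sobczyk's theorem to reduce to part~(i). The only minor difference is that in~(ii) you apply~(i) to the separable complemented subspace~$Z$, whereas one could equivalently observe that $Y$ complemented in~$Z$ and $Z$ complemented in~$X$ forces $Y$ complemented in~$X$ and apply~(i) directly to~$X$; either route is fine.
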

\begin{proof}
(i) follows from Proposition~\ref{pro:extensible} and the failure of property~(K) in~$c_0$. (ii)
is a consequence of~(i) and Sobczyk's theorem (see e.g. \cite[Theorem~5.11]{fab-ultimo}).
\end{proof}

\begin{cor}\label{cor:GrothendieckCK}
Let $L$ be a compact Hausdorff topological space. Then $C(L)$
is Grothendieck if and only if it has property~(K).
\end{cor}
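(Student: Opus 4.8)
The plan is to split the biconditional into its two implications and to observe that almost all of the work has already been carried out earlier in the paper. For the ``only if'' direction I would simply recall the remark from the introduction: every Grothendieck space has property~(K), because in a Grothendieck space every $w^*$-convergent sequence in the dual is weakly convergent, and Mazur's theorem then produces a norm-convergent (hence $\mu(X^*,X)$-convergent) convex block subsequence. Thus if $C(L)$ is Grothendieck it automatically has property~(K), with no special use of the $C(L)$ structure.

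For the ``if'' direction, suppose $C(L)$ has property~(K). The first step is to apply Corollary~\ref{cor:K-no-c0}(i), which says directly that $C(L)$ contains no complemented subspace isomorphic to~$c_0$. The remaining step is then a fact internal to the theory of spaces of continuous functions: a $C(K)$-space is Grothendieck precisely when it contains no complemented copy of~$c_0$. I would invoke this known dichotomy (due to Cembranos; see also the standard treatments of $C(K)$-spaces) to conclude that $C(L)$ is Grothendieck. Putting the two implications together finishes the proof.

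I expect the main obstacle to be nothing more than correctly identifying and citing this $C(K)$ dichotomy. The genuinely analytic content lives on one side in Bourgain-type arguments feeding into Corollary~\ref{cor:K-no-c0}, and on the other side in the $C(K)$ structure theory; both are already available, so the corollary itself should reduce to assembling these pieces. One direction of the dichotomy is elementary, since the Grothendieck property passes to complemented subspaces and $c_0$ is not Grothendieck; it is only the converse that carries weight.

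If one preferred not to cite the dichotomy as a black box, I would instead sketch its relevant half: if $C(L)$ fails to be Grothendieck, there is a $w^*$-null sequence $(x_n^*)$ in $C(L)^*$ that is not weakly null, and a standard gliding-hump/Rosenthal-type extraction produces from it a complemented copy of~$c_0$ inside $C(L)$, in direct contradiction with Corollary~\ref{cor:K-no-c0}(i). Either way, the essential point is that property~(K) forces the absence of a complemented~$c_0$, and for $C(L)$ this absence is already equivalent to being Grothendieck.
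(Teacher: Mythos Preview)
Your proposal is correct and follows exactly the same route as the paper: one direction is the general fact (from the introduction) that Grothendieck spaces have property~(K), and the other combines Corollary~\ref{cor:K-no-c0}(i) with the known characterization that $C(L)$ is Grothendieck if and only if it contains no complemented copy of~$c_0$ (the paper cites \cite[p.~74]{cem-men} for this). The only minor inaccuracy is your aside that ``Bourgain-type arguments'' feed into Corollary~\ref{cor:K-no-c0}; in fact that corollary rests only on Proposition~\ref{pro:extensible} and the failure of property~(K) in~$c_0$, not on Bourgain's result.
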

\begin{proof}
Combine Corollary~\ref{cor:K-no-c0}(i) and the fact that $C(L)$ is Grothendieck if and only if it does not 
contain complemented subspaces isomorphic to~$c_0$ (see e.g. \cite[p.~74]{cem-men}).
\end{proof}

We next discuss the behavior of property~(K) with respect to quotients.

\begin{theo}\label{theo:3space}
Let $X$ be a Banach space and $Y \sub X$ a subspace. 
\begin{enumerate}
\item[(i)] If $Y$ is reflexive and $X$ has property~(K), then $X/Y$
has property~(K).
\item[(ii)] If $Y$ is Grothendieck and $X/Y$ has property~(K), then $X$ has property~(K).
\end{enumerate}
\end{theo}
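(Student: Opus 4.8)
The plan is to prove the two implications separately, using throughout the identifications $(X/Y)^*=Y^\perp:=\{x^*\in X^*:x^*|_Y=0\}$ and $Y^*=X^*/Y^\perp$, and writing $q\colon X\to X/Y$ for the quotient map. It suffices to test property~(K) on $w^*$-null sequences. Two preliminary observations organise everything. First, for a sequence $(z_n^*)$ in $Y^\perp$, being $w^*$-null as elements of $(X/Y)^*$ is the same as being $w^*$-null in~$X^*$, since $\langle z^*,x\rangle=\langle z^*,q(x)\rangle$ for $z^*\in Y^\perp$. Second, with \emph{no} hypothesis on~$Y$, the topology $\mu(X^*,X)|_{Y^\perp}$ is always coarser than $\mu((X/Y)^*,X/Y)=\mu(Y^\perp,X/Y)$: for weakly compact $W\sub X$ and $z^*\in Y^\perp$ one has $\sup_{x\in W}|\langle z^*,x\rangle|=\sup_{\bar{x}\in q(W)}|\langle z^*,\bar{x}\rangle|$, and $q(W)$ is weakly compact in $X/Y$, so each seminorm defining the former topology is among those defining the latter.

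To prove~(i), start from a $w^*$-null sequence $(z_n^*)$ in $Y^\perp$. By the first observation it is $w^*$-null in~$X^*$, so property~(K) of~$X$ yields a convex block subsequence $(y_k^*)$ — again lying in the closed subspace $Y^\perp$ — that is $\mu(X^*,X)$-convergent, with limit~$0$ since it is $w^*$-null. I must upgrade this to $\mu((X/Y)^*,X/Y)$-convergence. By the second observation this does \emph{not} follow formally, because that target topology is finer; the missing ingredient, which I expect to be the main obstacle, is a lifting lemma: if $Y$ is reflexive, then every weakly compact $K\sub X/Y$ satisfies $K\sub q(W)$ for some weakly compact $W\sub X$. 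Granting this, for weakly compact $K\sub X/Y$ I would estimate $\sup_{\bar{x}\in K}|\langle y_k^*,\bar{x}\rangle|\le\sup_{x\in W}|\langle y_k^*,x\rangle|\to 0$, which is precisely the desired convergence.

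I would establish the lifting lemma through the biduals. Since $Y$ is reflexive, $B_Y$ is weakly compact in~$X$, hence $w^*$-compact in~$X^{**}$; thus $Y$ is $w^*$-closed in~$X^{**}$ and $Y^{\perp\perp}=Y$, equivalently $\ker(q^{**})=Y$, where $q^{**}\colon X^{**}\to(X/Y)^{**}$ is $w^*$-$w^*$-continuous (being the adjoint of $q^*$). Given weakly compact $K\sub X/Y$, regard it as a $w^*$-compact subset of $(X/Y)^{**}$ and set $W:=(q^{**})^{-1}(K)\cap C\,B_{X^{**}}$ with $C>\sup_{\bar{x}\in K}\|\bar{x}\|$; this is $w^*$-compact in~$X^{**}$. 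For each $\bar{x}\in K$ a Hahn--Banach lift $x_0\in X$ with $q(x_0)=\bar{x}$ and $\|x_0\|\le\|\bar{x}\|+1\le C$ shows $q^{**}(W)=K$. Moreover every $\xi\in W$ lies in~$X$: choosing such a lift $x_0$ of $q^{**}(\xi)$ gives $\xi-x_0\in\ker(q^{**})=Y\sub X$, hence $\xi\in X$. Since the $w^*$-topology of $X^{**}$ restricts to the weak topology on~$X$, the set $W$ is weakly compact in~$X$ with $q(W)=K$, as required.

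For~(ii), let $(x_n^*)$ be $w^*$-null in $B_{X^*}$ and let $R\colon X^*\to Y^*$ be the restriction map. Then $(x_n^*|_Y)$ is $w^*$-null in~$Y^*$, hence weakly null because $Y$ is Grothendieck; by Mazur's theorem I pass to a convex block subsequence $(u_k^*)$ with $\|u_k^*|_Y\|_{Y^*}\to 0$ (the same convex coefficients applied to the~$x_n^*$). As $Y^*=X^*/Y^\perp$ isometrically, $\mathrm{dist}(u_k^*,Y^\perp)=\|u_k^*|_Y\|_{Y^*}\to 0$, so I split $u_k^*=v_k^*+w_k^*$ with $v_k^*\in Y^\perp$ and $\|w_k^*\|\to 0$. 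Then $(v_k^*)$ is $w^*$-null in~$X^*$, hence (first observation) $w^*$-null in $(X/Y)^*$, and property~(K) of~$X/Y$ furnishes a convex block subsequence $(\tilde v_j^*)$ that is $\mu((X/Y)^*,X/Y)$-null. Applying the same coefficients to $(u_k^*)$ and $(w_k^*)$ gives $\tilde u_j^*=\tilde v_j^*+\tilde w_j^*$, where $(\tilde u_j^*)$ is a convex block subsequence of the original $(x_n^*)$ and $\|\tilde w_j^*\|\to 0$. By the second observation, $\mu((X/Y)^*,X/Y)$-convergence of $(\tilde v_j^*)$ forces $\mu(X^*,X)$-convergence (the restricted Mackey topology being coarser), and combining with $\|\tilde w_j^*\|\to 0$ I conclude $\tilde u_j^*\to 0$ in $\mu(X^*,X)$, so $X$ has property~(K). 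I note that~(ii) exploits only the automatic coarseness and needs no lifting, so the sole genuine difficulty is the lifting lemma required for~(i).
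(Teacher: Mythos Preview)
Your proof is correct and follows the same overall strategy as the paper for both parts: in~(i) you pull a $w^*$-null sequence from $(X/Y)^*$ back to $Y^\perp\subset X^*$, apply property~(K) of~$X$, and then upgrade $\mu(X^*,X)$-convergence to $\mu((X/Y)^*,X/Y)$-convergence via a lifting of weakly compact sets; in~(ii) you use the Grothendieck property and Mazur to kill the $Y$-component in norm, then apply property~(K) of~$X/Y$ to the $Y^\perp$-part and push the Mackey convergence back down to~$X^*$ via your ``second observation'' (which the paper uses implicitly). The one noteworthy difference is how the lifting lemma in~(i) is obtained: the paper first uses openness of~$q$ to find a bounded weakly closed $L_0\subset X$ with $q(L_0)=L$, and then cites the three-space literature \cite{cas-gon} for the fact that reflexivity of~$Y$ forces $L_0$ to be weakly compact; you instead give a self-contained bidual argument, using $\ker(q^{**})=Y^{\perp\perp}=Y$ to show that $W:=(q^{**})^{-1}(K)\cap C\,B_{X^{**}}$ actually lies in~$X$. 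Your route has the virtue of being self-contained; the paper's has the virtue of identifying the step as an instance of a general three-space principle. One cosmetic point: for your Hahn--Banach lift with $\|x_0\|\le\|\bar x\|+1$ to land in $C\,B_{X^{**}}$ you need $C\ge\sup_{\bar x\in K}\|\bar x\|+1$, not merely $C>\sup_{\bar x\in K}\|\bar x\|$ (or else use a $(1+\varepsilon)$-lift, in which case your stated~$C$ suffices).
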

\begin{proof} Let $q:X \to X/Y$ be the quotient map. Its 
adjoint $q^*: (X/Y)^* \to X^*$ is an isomorphism (and $w^*$-$w^*$-homeomorphism) onto~$Y^\perp$.

(i) Let $(\xi_n^*)$ be a $w^*$-null sequence in $(X/Y)^*$. Then 
$(q^*(\xi_n^*))$ is $w^*$-null in~$X^*$. Since $X$ has property~(K), there is
a convex block subsequence $(\eta_n^*)$ of $(\xi_n^*)$ such that $(q^*(\eta_n^*))$ is
$\mu(X^*,X)$-null. We claim that $(\eta_n^*)$ converges  
to~$0$ with respect to~$\mu((X/Y)^*,X/Y)$. Indeed, take any weakly compact set $L \sub X/Y$. 
Since $q$ is open and $L$ is bounded and weakly closed, 
we can find a bounded and weakly closed set $L_0 \sub X$ such that $q(L_0)=L$.
Since $Y$ is reflexive and $L$ is weakly compact, $L_0$ is weakly compact as well (see e.g. \cite[2.4.b]{cas-gon}).
Then
$$
	\lim_{n\to \infty} \, \sup_{z\in L}\big|\langle \eta_n^*,z \rangle\big|= \lim_{n\to \infty} \, 
	\sup_{x\in L_0}\big|\langle q^*(\eta_n^*),x \rangle\big|=0.
$$
This shows that $(\eta_n^*)$ converges to~$0$ with respect to~$\mu((X/Y)^*,X/Y)$.

(ii) Let $(x_n^*)$ be a $w^*$-null sequence in~$X$. Then the sequence of restrictions $(x_n^*|_Y)$
is $w^*$-null in~$Y^*$. Since $Y$ is Grothendieck, there is a convex block subsequence $(w_n^*)$ of~$(x^*_n)$ 
such that $\|w_n^*|_Y\|_{Y^*}\to 0$. Bearing in mind that the map
$$
	\phi: X^*/Y^{\perp} \to Y^*, \quad
	\phi(x^*+Y^\perp):= x^*|_{Y},
$$ 
is an isomorphism, we can find a sequence $(v_n^*)$ in~$Y^{\perp}$ such that 
\begin{equation}\label{eqn:norm}
	\|w_n^*+v_n^*\|_{X^*}\to 0. 
\end{equation}
Note that $(v_n^*)$ is $w^*$-null (by~\eqref{eqn:norm} and the fact that $(w_n^*)$ is $w^*$-null). Then
there is a $w^*$-null sequence $(\xi_n^*)$ in~$(X/Y)^*$ such that $q^*(\xi_n^*)=v_n^*$ for all $n\in \N$. 
Since $X/Y$ has~property~(K), there is a convex block subsequence $(\eta_n^*)$ of~$(\xi_n^*)$ 
converging to~$0$ with respect to $\mu((X/Y)^*,X/Y)$. It is now easy to check that
$(q^*(\eta_n^*))$ is a convex block subsequence of~$(v_n^*)$ converging to~$0$ with respect to~$\mu(X^*,X)$. 
By~\eqref{eqn:norm}, there is a convex block subsequence $(\tilde{w}_n^*)$ of~$(w_n^*)$ (and so of~$(x_n^*)$)
such that 
$$
	\|\tilde{w}_n^*+q^*(\eta_n^*)\|_{X^*}\to 0.
$$
Since $(q^*(\eta^*_n))$ is~$\mu(X^*,X)$-null, the same holds for $(\tilde{w}_n^*)$.
\end{proof}

We say that a family $\mathcal{G}$ of weakly compact subsets of a Banach space~$X$ {\em strongly generates}~$X$
if for every weakly compact set $L \sub X$ and every $\varepsilon>0$ there is $G \in \mathcal{G}$ such that $L \sub G + \varepsilon B_X$.
We denote by $SG(X)$ the least cardinality of a family of weakly compact subsets of~$X$ which strongly generates~$X$.
The cardinal $SG(X)$ coincides with the cofinality of an ordered structure studied in~\cite{avi-ple-rod-5}. Note that $X$ is SWCG
if and only if $SG(X)=\omega$ (see \cite[Theorem~2.1]{sch-whe}).

\begin{theo}\label{theo:K-p}
Let $X$ be a Banach space. If $SG(X)<\mathfrak{p}$, then $X$ has property~(K).
\end{theo}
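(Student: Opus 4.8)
The plan is to verify property~(K) directly by the reductions recalled in the terminology section: it suffices to take a $w^*$-null sequence $(x_n^*)$ in $B_{X^*}$ and produce a \emph{rational} convex block subsequence that is $\mu(X^*,X)$-null. Fix a family $\{G_\alpha:\alpha<\kappa\}$ of weakly compact subsets of $X$ that strongly generates $X$, with $\kappa=SG(X)<\mathfrak{p}$. Because the $G_\alpha$ strongly generate $X$, a norm-bounded sequence in $X^*$ is $\mu(X^*,X)$-null as soon as it tends to $0$ uniformly on each $G_\alpha$ (given a weakly compact $L$ and $\varepsilon>0$, pick $\alpha$ with $L\subseteq G_\alpha+\varepsilon B_X$ and use $\|y\|\le 1$ to absorb the error term). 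So it is enough to build a single convex block subsequence $(y_k)$ of $(x_n^*)$ with $\sup_{x\in G_\alpha}|\langle y_k,x\rangle|\to 0$ for every $\alpha<\kappa$.

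First I would isolate a one-set lemma: for every weakly compact $G\subseteq X$, every $\varepsilon>0$ and every infinite $M\subseteq\N$ there is a convex combination $y=\sum_{n\in F}a_n x_n^*$ with $F\subseteq M$ finite and $\sup_{x\in G}|\langle y,x\rangle|<\varepsilon$. To prove it, regard the restrictions $x_n^*|_G$ as elements of $C(K)$, where $K=(G,w)$ is a compact space; they are bounded by $1$ and converge pointwise to $0$. If no such $y$ existed, then $0$ would lie outside the norm-closed convex hull of $\{x_n^*|_G:n\in M\}$ in $C(K)$, so Hahn--Banach would supply a Radon measure $\nu$ on $K$ and $\beta>0$ with $\int x_n^*\,d\nu\ge\beta$ for all $n\in M$, contradicting the bounded convergence theorem. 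Since a finite union of weakly compact sets is weakly compact, the same statement holds simultaneously for any finitely many $G_{\alpha_1},\dots,G_{\alpha_r}$.

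The crux is the diagonalization over the $\kappa<\mathfrak{p}$ sets, in the spirit of Haydon--Levy--Odell. Let $\mathbb{B}$ be the countable set of all finitely supported rational convex combinations $b=\sum_{n\in F_b}a_n x_n^*$, write $y_b$ for the corresponding functional and $\min b=\min F_b$. For $\alpha<\kappa$ and $j\in\N$ set
$$M_{\alpha,j}=\{b\in\mathbb{B}:\ \min b\ge j \ \text{ and }\ \sup_{x\in G_\alpha}|\langle y_b,x\rangle|<1/j\}.$$
Applying the one-set lemma to $G_{\alpha_1}\cup\dots\cup G_{\alpha_r}$ on the tails $[m,\infty)$ (taking $m$ large and $\varepsilon=1/\max_i j_i$) shows that every finite intersection $\bigcap_i M_{\alpha_i,j_i}$ is infinite; thus the family $\{M_{\alpha,j}:\alpha<\kappa,\ j\in\N\}$, together with the auxiliary sets $\{b:\min b\ge j\}$, has the strong finite intersection property. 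Its cardinality is $\max(\kappa,\aleph_0)<\mathfrak{p}$ (recall $\mathfrak{p}\ge\omega_1$), so after identifying $\mathbb{B}$ with $\N$ the definition of $\mathfrak{p}$ yields an infinite pseudo-intersection $W\subseteq\mathbb{B}$, i.e. $W\setminus M_{\alpha,j}$ is finite for all $\alpha,j$. The auxiliary sets force $\min b\to\infty$ along $W$, so a greedy selection produces an increasing block chain $b_{(1)}<b_{(2)}<\cdots$ inside $W$ (meaning $\max F_{b_{(k)}}<\min F_{b_{(k+1)}}$), which is a genuine rational convex block subsequence of $(x_n^*)$. For each fixed $\alpha$ and $j$, cofinitely many $b_{(k)}$ lie in $M_{\alpha,j}$, so $\sup_{x\in G_\alpha}|\langle y_{b_{(k)}},x\rangle|<1/j$ eventually; letting $j\to\infty$ gives $\sup_{x\in G_\alpha}|\langle y_{b_{(k)}},x\rangle|\to 0$ for every $\alpha<\kappa$. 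By the first paragraph, $(y_{b_{(k)}})$ is then $\mu(X^*,X)$-null.

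I expect the main obstacle to be exactly this combinatorial step: choosing the countable index set $\mathbb{B}$ and the threshold sets $M_{\alpha,j}$ so that (a)~the strong finite intersection property is a direct consequence of the one-set lemma, and (b)~an infinite pseudo-intersection can be thinned to an \emph{honest} block sequence with separated supports. The self-referential coupling between the precision $1/j$ and the position constraint $\min b\ge j$ is what makes these two demands compatible and simultaneously delivers $\min b\to\infty$; by contrast, the analytic input (Hahn--Banach together with bounded convergence) and the passage from uniform convergence on each $G_\alpha$ to $\mu(X^*,X)$-convergence are comparatively routine.
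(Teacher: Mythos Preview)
Your argument is correct. It is closest in spirit to the paper's \emph{second} proof, but differs in both of its two main ingredients, and it is quite distinct from the paper's \emph{first} proof.

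For the analytic input, the paper (Lemma~\ref{lem:DFJP}) invokes the Davis--Figiel--Johnson--Pe\l czy\'nski factorization through a reflexive space to produce, for each weakly compact $G$, a full convex block subsequence converging uniformly on~$G$. You instead prove a weaker ``one-point'' lemma via $C(K)$-duality (Hahn--Banach plus dominated convergence against a Radon measure on $(G,w)$), obtaining just a single small convex combination from any tail. This is more elementary and is exactly enough for what follows.

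For the combinatorics, the paper uses the Haydon--Levy--Odell diagonalization (Lemmas~\ref{lem:HLO} and~\ref{lem:corHLO}), working with the relation $\preceq$ on rational convex block subsequences and extracting one common refinement across all~$\alpha<\kappa$. You bypass this machinery entirely: by indexing the countable set $\mathbb{B}$ of rational convex combinations and defining the threshold sets $M_{\alpha,j}$, you reduce directly to a single application of the \emph{definition} of~$\mathfrak{p}$, then thin the pseudo-intersection to a block sequence using the auxiliary constraint $\min b\to\infty$. This is a clean shortcut; the trade-off is that the paper's HLO-based Lemma~\ref{lem:corHLO} is modular and is reused for Theorems~\ref{theo:l1sums} and~\ref{theo:lpsums}, whereas your encoding is tailored to this theorem. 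The paper's first proof is different again: it identifies $SG(X)$ with the character of~$0$ in $(B_{X^*},\mu(X^*,X))$ and uses that a point of small character in the closure of a countable set is a sequential limit from that set.
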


We will give two different proofs of Theorem~\ref{theo:K-p}. The first one is
based on duality arguments inspired by the proof of \cite[Theorem~2.1]{sch-whe}. 

Given a topological space~$T$, the {\em character} of a point~$t\in T$, denoted by 
$\chi(t,T)$, is the least cardinality of a neighborhood basis of~$t$.

\begin{lem}\label{pro:SGcharacter}
Let $X$ be a Banach space. Then $SG(X)=\chi(0,(B_{X^*},\mu(X^*,X)))$.
\end{lem}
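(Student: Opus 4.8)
The plan is to prove the two inequalities $\chi(0,(B_{X^*},\mu(X^*,X)))\le SG(X)$ and $SG(X)\le \chi(0,(B_{X^*},\mu(X^*,X)))$ separately. The one tool I will use repeatedly is that a basic $\mu(X^*,X)$-neighborhood of $0$ is the polar $L^\circ=\{x^*\in X^*:\sup_{x\in L}|\langle x^*,x\rangle|\le 1\}$ of a weakly compact absolutely convex set $L\sub X$; more generally I write $V(L,\epsilon):=\{x^*\in X^*:\sup_{x\in L}|\langle x^*,x\rangle|\le\epsilon\}$, so that $V(L,\epsilon)=(\epsilon^{-1}L)^\circ$. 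By the Krein--\v{S}mulyan theorem I may always replace a weakly compact set by its closed absolutely convex hull without leaving the class, so there is no loss in assuming weakly compact sets absolutely convex. I will also note at the outset that both cardinals are infinite (no finite family of bounded sets can strongly generate, and $0$ is never isolated in $(B_{X^*},\mu(X^*,X))$ by a scaling argument), so the factor $\omega$ appearing below is harmless.

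For $\chi\le SG$: starting from a family $\mathcal{G}=\{G_\alpha:\alpha<\kappa\}$ of weakly compact sets strongly generating $X$, with $\kappa=SG(X)$, I would show that the family $\{V(G_\alpha,1/n)\cap B_{X^*}:\alpha<\kappa,\ n\in\N\}$, of cardinality $\kappa\cdot\omega=\kappa$, is a neighborhood basis of $0$ in $(B_{X^*},\mu(X^*,X))$. Given a basic neighborhood $V(L,\epsilon)\cap B_{X^*}$, strong generation supplies $G_\alpha$ with $L\sub G_\alpha+(\epsilon/2)B_X$; then for $n$ with $1/n\le\epsilon/2$, splitting each $x\in L$ as $g+r$ with $g\in G_\alpha$ and $\|r\|\le\epsilon/2$ and using $\|x^*\|\le 1$ yields $V(G_\alpha,1/n)\cap B_{X^*}\sub V(L,\epsilon)\cap B_{X^*}$. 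This direction is a direct estimate.

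For $SG\le\chi$: starting from a neighborhood basis $\{U_\beta:\beta<\lambda\}$ of $0$ in $(B_{X^*},\mu(X^*,X))$, with $\lambda=\chi(0,(B_{X^*},\mu(X^*,X)))$, I would refine each $U_\beta$ to a basic neighborhood inside it and rescale, so that $U_\beta=L_\beta^\circ\cap B_{X^*}$ for a weakly compact absolutely convex $L_\beta$. The crux is the polar identity $L_\beta^\circ\cap B_{X^*}=(L_\beta\cup B_X)^\circ=W_\beta^\circ$, where $W_\beta:=\overline{\mathrm{aco}}(L_\beta\cup B_X)$ (using $B_{X^*}=(B_X)^\circ$ and $(A\cup B)^\circ=A^\circ\cap B^\circ$). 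I then claim that $\mathcal{G}:=\{rL_\beta:r\in\mathbb{Q},\ r>0,\ \beta<\lambda\}$, a family of weakly compact sets of cardinality $\lambda\cdot\omega=\lambda$, strongly generates $X$. To verify this, fix a weakly compact absolutely convex $K\sub X$ and $\delta>0$; since $V(K,\eta)\cap B_{X^*}$ is a $\mu(X^*,X)$-neighborhood of $0$ for each $\eta>0$, the basis gives $\beta$ with $W_\beta^\circ=U_\beta\sub V(K,\eta)=(\eta^{-1}K)^\circ$, and the bipolar theorem in the dual pair $\langle X^*,X\rangle$ converts this into $\eta^{-1}K\sub W_\beta$, i.e. $K\sub\eta W_\beta=\overline{\mathrm{aco}}(\eta L_\beta\cup\eta B_X)$. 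Finally, the elementary inclusion $\overline{\mathrm{aco}}(\eta L_\beta\cup\eta B_X)\sub\eta L_\beta+2\eta B_X$, together with a rational choice $\eta\le\delta/2$, gives $K\sub\eta L_\beta+\delta B_X$ with $\eta L_\beta\in\mathcal{G}$.

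The main obstacle I anticipate is this second direction, and specifically the passage through polars: one must correctly identify $L_\beta^\circ\cap B_{X^*}$ with the polar of $\overline{\mathrm{aco}}(L_\beta\cup B_X)$, apply the bipolar theorem while remembering that both $K$ and $W_\beta$ coincide with their closed absolutely convex hulls, and then translate the resulting inclusion back into the additive form $K\sub G+\delta B_X$ required by the definition of strong generation. The inclusion $\overline{\mathrm{aco}}(A\cup B)\sub A+B$ for absolutely convex $A,B$, combined with a norm-closure estimate, is routine but is precisely where the factor $2\eta$ (and hence the rational scalars built into $\mathcal{G}$) enters, so it is the step to handle with care.
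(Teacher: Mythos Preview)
Your proof is correct. The first inequality $\chi\le SG$ is argued exactly as in the paper. For the second inequality $SG\le\chi$ the paper takes a slightly different route: instead of using polar calculus and the bipolar theorem, it argues by contradiction via Hahn--Banach separation --- assuming some $x\in L$ lies outside $nL_W+\varepsilon B_X$, it separates by a norm-one functional $x^*$, rescales $x^*$ by $c=(\frac{1}{\varepsilon_W}p_{L_W}(x^*)+1)^{-1}$ so that $cx^*\in V(L_W,\varepsilon_W)\sub V(L,\varepsilon)$, and derives $x^*(x)<np_{L_W}(x^*)+\varepsilon$, a contradiction. Your approach packages the same duality (the bipolar theorem being a reformulation of the separation theorem) more conceptually: the identity $L_\beta^\circ\cap B_{X^*}=W_\beta^\circ$ and the bipolar step $W_\beta^\circ\sub(\eta^{-1}K)^\circ\Rightarrow \eta^{-1}K\sub W_\beta$ do in one stroke what the paper does by hand. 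Neither route is materially shorter; yours avoids the explicit scaling constant, while the paper's is self-contained and does not invoke the bipolar theorem. (Incidentally, since $\eta L_\beta$ is weakly compact and $\eta B_X$ is closed convex, the sum $\eta L_\beta+\eta B_X$ is already closed, so your factor $2$ can be dropped --- but it does no harm.)
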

\begin{proof}
For every weakly compact set $L \sub X$ and $\epsilon>0$, we consider
the neighborhood of~$0$ in~$(B_{X^*},\mu(X^*,X))$ defined by
$$
	V(L,\epsilon):=\{x^*\in B_{X^*}: \, P_{L}(x^*)<\epsilon\},
$$
where $p_L(x^*):=\sup\{|x^*(x)|: x\in L\}$. The collection
of all sets of the form $V(L,\epsilon)$ is a basis of neighborhoods of~$0$ in~$(B_{X^*},\mu(X^*,X))$.

We first prove that $SG(X)\geq \chi(0,(B_{X^*},\mu(X^*,X)))$. Fix a family $\mathcal{G}$ of weakly compact
sets strongly generating~$X$ such that $|\mathcal{G}|=SG(X)$.
{\em We claim that} 
$$
	\mathcal{V}:=\Big\{V\Big(G,\frac{1}{n}\Big): \, G\in \mathcal{G}, \, n\in \Nat\Big\}
$$
{\em is a basis of neighborhoods of~$0$ in~$(B_{X^*},\mu(X^*,X))$.} Indeed, take any
weakly compact set $L \sub X$ and $\epsilon>0$. Then there is $G \in \mathcal{G}$ such that $L \sub G+\frac{\epsilon}{2} B_X$, so that
$V(G,\frac{1}{n})\sub V(L,\epsilon)$ whenever $\frac{1}{n} \leq \frac{\epsilon}{2}$. This proves the claim and shows that
$\chi(0,(B_{X^*},\mu(X^*,X)))\leq |\mathcal{V}|\leq |\mathcal{G}|=SG(X)$.

We now prove that $SG(X)\leq \chi(0,(B_{X^*},\mu(X^*,X)))$. Let $\mathcal{W}$ be a
basis of neighborhoods of~$0$ in~$(B_{X^*},\mu(X^*,X))$ with $|\mathcal{W}|=\chi(0,(B_{X^*},\mu(X^*,X)))$. 
For each $W \in \mathcal{W}$ we can choose a weakly compact set $L_W\sub X$ and $\epsilon_W>0$
such that $V(L_W,\epsilon_W) \sub W$. By the Krein-Smulyan theorem
(see e.g. \cite[p.~51, Theorem~11]{die-uhl-J}), we can assume further
that each $L_W$ is absolutely convex. {\em We claim that}
$$
	\mathcal{G}:=\{nL_W: \, W\in \mathcal{W}, \, n\in \Nat\}
$$
{\em strongly generates~$X$.} To this end, fix a weakly compact set $L\sub X$ and $\epsilon>0$.
There is some $W\in \mathcal{W}$ such that $V(L_W,\epsilon_W) \sub V(L,\epsilon)$. Pick
$n\in \Nat$ with $n\geq \frac{\epsilon}{\epsilon_W}$. We will check that 
\begin{equation}\label{equation:inclusion}
	L \sub n L_W + \epsilon B_X. 
\end{equation}
By contradiction, suppose there is $x\in L$ which does not belong to the convex closed set $nL_W + \epsilon B_X$. 
By the Hahn-Banach separation theorem, there is $x^*\in X^*$ with $\|x^*\|_{X^*}=1$ such that
\begin{equation}\label{equation:HB}
	x^*(x)>\sup\{x^*(y): \, y\in nL_W + \epsilon B_X\}=np_{L_W}(x^*)+\epsilon.
\end{equation}
Set $c:=(\frac{1}{\epsilon_W}p_{L_W}(x^*)+1)^{-1}$. Then $cx^*\in B_{X^*}$ and 
$cx^*\in V(L_W,\epsilon_W) \sub V(L,\epsilon)$. In particular,
$cx^*(x)<\epsilon$ and therefore
$$
	x^*(x)<\frac{\epsilon}{\epsilon_W} p_{L_W}(x^*)+\epsilon \leq n p_{L_W}(x^*)+\epsilon,
$$ 
which contradicts~\eqref{equation:HB}. Hence~\eqref{equation:inclusion} holds and this completes the proof
that $\mathcal{G}$ strongly generates~$X$, as claimed. It follows that
$$
	SG(X)\leq |\mathcal{G}| \leq |\mathcal{W}|=\chi(0,(B_{X^*},\tau(X^*,X)))
$$
and the proof of the lemma is finished.
\end{proof}

\begin{proof}[First proof of Theorem~\ref{theo:K-p}]
Let $(x_n^*)$ be a $w^*$-null sequence in~$B_{X^*}$. Fix $k\in \Nat$. Define
$A_k:={\rm co}_{\mathbb{Q}}\{x^*_n:n \geq k\} \sub B_{X^*}$ and observe that
$$
	0 \in \overline{\{x^*_n: \, n \geq k\}}^{w^*} \sub \overline{{\rm co}\{x^*_n: \, n \geq k\}}^{w^*}\stackrel{\eqref{eqn:convexMackey}}{=}
	\overline{{\rm co}\{x^*_n: \, n \geq k\}}^{\mu(X^*,X)}=
	\overline{A_k}^{\mu(X^*,X)}.
$$ 
Since $SG(X)=\chi(0,(B_{X^*},\mu(X^*,X)))$ (Lemma~\ref{pro:SGcharacter})
is strictly less than~$\mathfrak{p}$ and $A_k$ is countable, there is a sequence $(y^*_{n,k})_{n\in \N}$ in~$A_k$ converging
to~$0$ with respect to~$\mu(X^*,X)$ (see e.g. \cite[24A]{freMartin}).

Now, using that $\chi(0,(B_{X^*},\mu(X^*,X))) < \mathfrak{p}\leq \mathfrak{b}$ (see e.g. \cite[14B]{freMartin}), 
we can find a map $\varphi: \Nat \to \Nat$ such that the sequence $(y^*_{\varphi(k),k})$ converges to~$0$
with respect to~$\mu(X^*,X)$ (cf. \cite[Lemma~1]{fra-ple}). 
Finally, notice that $y^*_{\varphi(k),k}\in A_k$ for all $k\in \Nat$
and, therefore, some subsequence of~$(y^*_{\varphi(k),k})$ is a convex block subsequence of~$(x_n^*)$.
This proves that $X$ has property~(K).
\end{proof}

Our second proof of Theorem~\ref{theo:K-p} is based on Lemma~\ref{lem:HLO} below, which is taken from~\cite[2A]{hay-lev}.
Note that the statement given here is slightly different from the original one 
(it follows from the proof of~\cite[2A]{hay-lev}).

Given two sequences $(g_n)$ and $(h_n)$ in a linear space, we write 
$$
	(g_n) \preceq (h_n)
$$ 
to denote that $(g_n)$ is {\em eventually} a rational convex block subsequence of~$(h_n)$, i.e.
there is $n_0\in \N$ such that $(g_{n})_{n\geq n_0}$ is a rational convex block subsequence of~$(h_n)$.

\begin{lem}[Haydon-Levy-Odell]\label{lem:HLO}
Let $\kappa$ be a cardinal with $\kappa<\mathfrak{p}$.
Let $(f_n)$ be a linearly independent sequence in a linear space and, 
for each $\alpha<\kappa$, let $(f_{n,\alpha})$ be a rational convex block subsequence of~$(f_n)$.
Suppose that for every finite set $F \sub \kappa$ there is a rational convex block subsequence $(g_n)$ of~$(f_n)$
such that $(g_n) \preceq (f_{n,\alpha})$ for all $\alpha\in F$. Then there is 
a rational convex block subsequence $(h_n)$ of~$(f_n)$
such that $(h_n) \preceq (f_{n,\alpha})$ for all $\alpha<\kappa$.
\end{lem}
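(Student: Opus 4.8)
The plan is to encode ``$(g_n)$ is eventually a rational convex block subsequence of $(h_n)$'' as an almost-inclusion statement between subsets of a suitable countable index set, so that the hypothesis becomes exactly the finite-intersection property and the conclusion becomes the existence of a pseudo-intersection. This is precisely the combinatorial content of the cardinal $\mathfrak{p}$. First I would fix a countable set $S$ that parametrises all \emph{possible} rational convex blocks built from $(f_n)$; concretely, let $S$ be the set of all pairs $(I,(a_n)_{n\in I})$ where $I\sub\N$ is a finite interval-type block and $(a_n)$ are nonnegative rationals summing to~$1$, so that each rational convex block subsequence of $(f_n)$ is a sequence of elements of~$S$ with strictly increasing, non-overlapping supports. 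The key point, which uses that $(f_n)$ is \emph{linearly independent}, is that a rational convex block $g=\sum_{n\in I}a_n f_n$ determines the pair $(I,(a_n))$ uniquely; thus each rational convex block subsequence of $(f_n)$ can be identified with an infinite subset of~$S$ (its set of blocks), and the relation $(g_n)\preceq(h_n)$ translates into ``the block set of $(g_n)$ is, modulo a finite initial segment, a sub-collection of the coarser blocks generated by $(h_n)$.''

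Next I would turn each given $(f_{n,\alpha})$ into an infinite subset $M_\alpha$ of the fixed countable set~$S$ (identifying $S$ with $\N$ via a bijection), chosen so that $(g_n)\preceq(f_{n,\alpha})$ holds \emph{iff} the block set of $(g_n)$ is almost contained in~$M_\alpha$. With this dictionary, the hypothesis ``for every finite $F\sub\kappa$ there is a common rational convex block subsequence $(g_n)$ with $(g_n)\preceq(f_{n,\alpha})$ for all $\alpha\in F$'' says exactly that every finite subfamily $\{M_\alpha:\alpha\in F\}$ has an infinite almost-intersection, i.e. the family $\{M_\alpha:\alpha<\kappa\}$ has the strong finite intersection property. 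Since $\kappa<\mathfrak{p}$, the defining property of~$\mathfrak{p}$ yields a single infinite set $A\sub S$ with $A\setminus M_\alpha$ finite for every $\alpha<\kappa$. I would then read $A$ back as (the block set of) a rational convex block subsequence $(h_n)$ of $(f_n)$, and the almost-inclusions $A\subseteq^* M_\alpha$ give precisely $(h_n)\preceq(f_{n,\alpha})$ for all $\alpha<\kappa$.

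The main obstacle I anticipate is the bookkeeping needed to make $A$ into a genuine convex block \emph{subsequence}: an arbitrary infinite $A\sub S$ need not consist of blocks with disjoint, increasing supports, so I cannot simply declare $A$ to be $(h_n)$. The fix is to first arrange, when setting up the dictionary, that membership in each $M_\alpha$ already forces blocks to come from the nested structure of the $(f_{n,\alpha})$'s; then from the pseudo-intersection $A$ I would greedily extract an infinite sub-collection of blocks with strictly increasing supports, which is possible because almost-containment in even a single $M_\alpha$ guarantees the supports in $A$ are cofinal and can be thinned to be non-overlapping. A secondary subtlety is the ``eventually'' in $\preceq$: because $A\setminus M_\alpha$ is only \emph{finite} rather than empty, the block set of $(h_n)$ sits inside $M_\alpha$ only after discarding finitely many terms, which is exactly what the relation $\preceq$ permits. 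Verifying that this thinning can be done once and for all, uniformly in~$\alpha$, is where the linear independence of $(f_n)$ and the countability of~$S$ do the real work, and I would present that extraction as the technical heart of the argument.
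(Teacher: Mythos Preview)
The paper does not prove this lemma: it is quoted from \cite[2A]{hay-lev} without proof, so there is no ``paper's own proof'' to compare against. Your plan is essentially the original Haydon--Levy--Odell argument: encode rational convex blocks of $(f_n)$ as elements of a countable set~$S$ (using linear independence for uniqueness), associate to each $(f_{n,\alpha})$ the set $M_\alpha\sub S$ of all rational convex combinations of finite subsets of $\{f_{m,\alpha}:m\in\N\}$, verify the strong finite intersection property, apply $\kappa<\mathfrak{p}$ to get a pseudo-intersection~$A$, and thin $A$ to a genuine block subsequence.

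There is, however, one real gap. You claim that ``almost-containment in even a single $M_\alpha$ guarantees the supports in~$A$ are cofinal.'' This is false: $M_\alpha$ contains, for instance, all rational convex combinations of $f_{1,\alpha}$ and $f_{2,\alpha}$ alone, and there are infinitely many of these, all with support contained in a fixed finite set. So the pseudo-intersection $A$ could consist entirely of blocks with bounded support, and no thinning would produce a block subsequence. The standard fix is to enlarge the family before invoking~$\mathfrak{p}$: add the countably many sets
\[
N_k:=\{s\in S:\text{the support of }s\text{ lies in }[k,\infty)\},\qquad k\in\N.
\]
The enlarged family $\{M_\alpha:\alpha<\kappa\}\cup\{N_k:k\in\N\}$ still has cardinality $<\mathfrak{p}$ and still has the strong finite intersection property (for finite $F\sub\kappa$ and finite $K\sub\N$, take the $(g_n)$ given by the hypothesis and discard an initial segment). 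A pseudo-intersection~$A$ of this larger family satisfies $A\sub^* N_k$ for every~$k$, which is exactly the cofinality of supports you need; now the greedy extraction of non-overlapping blocks works. With this correction, your argument goes through.
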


\begin{lem}\label{lem:corHLO}
Let $\kappa$ be a cardinal with $\kappa<\mathfrak{p}$.
Let $\mathcal{S}$ be a set of linearly independent sequences in a linear space.
For each $\alpha<\kappa$, let $\mathcal{S}_\alpha \sub \cS$ such that: 
\begin{enumerate}
\item[(i)] If $(g_n)\in \cS_\alpha$ and $(h_n) \preceq (g_n)$, then $(h_n)\in \cS_\alpha$.
\item[(ii)] Every sequence of~$\mathcal{S}$ admits a rational convex block subsequence belonging to~$\cS_\alpha$.
\end{enumerate}
Then every sequence of~$\cS$ admits a rational convex block subsequence belonging to~$\bigcap_{\alpha<\kappa} \cS_\alpha$.
\end{lem}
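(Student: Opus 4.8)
The plan is to derive Lemma~\ref{lem:corHLO} from the Haydon-Levy-Odell lemma (Lemma~\ref{lem:HLO}) by fixing an arbitrary sequence $(f_n)\in\cS$ and applying Lemma~\ref{lem:HLO} to it. First I would fix $(f_n)\in\cS$ and, for each $\alpha<\kappa$, use hypothesis~(ii) to choose a rational convex block subsequence $(f_{n,\alpha})$ of~$(f_n)$ with $(f_{n,\alpha})\in\cS_\alpha$. This sets up exactly the data required by Lemma~\ref{lem:HLO}: a linearly independent sequence $(f_n)$ together with a $\kappa$-indexed family of rational convex block subsequences.

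The next step is to verify the finite-compatibility hypothesis of Lemma~\ref{lem:HLO}, namely that for every finite $F\sub\kappa$ there is a rational convex block subsequence $(g_n)$ of~$(f_n)$ with $(g_n)\preceq(f_{n,\alpha})$ for all $\alpha\in F$. I would do this by induction on $|F|$, building a common refinement. Starting from $(f_n)$, I repeatedly apply hypothesis~(ii) and the stability condition~(i): given a common refinement of $(f_{n,\alpha})_{\alpha\in F'}$ for some $F'\subsetneq F$, I take a new element $\alpha_0\in F\setminus F'$ and use~(ii) to extract from the current refinement a further rational convex block subsequence lying in $\cS_{\alpha_0}$. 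The point is that passing to a rational convex block subsequence of a rational convex block subsequence is again a rational convex block subsequence of the original, so the new sequence is still $\preceq(f_{n,\alpha})$ for each $\alpha\in F'$; and it is $\preceq(f_{n,\alpha_0})$ because it lies in $\cS_{\alpha_0}$, which by~(i) is closed downward under $\preceq$ and consists of sequences built from $(f_{n,\alpha_0})$. Here I would need to be slightly careful that membership in $\cS_{\alpha_0}$ genuinely forces the $\preceq$ relation to $(f_{n,\alpha_0})$; the cleanest route is to invoke~(i) directly, observing that the refinement we produce is $\preceq$ the element of $\cS_{\alpha_0}$ extracted from the previous stage, so all the relations $\preceq(f_{n,\alpha})$ for $\alpha\in F'\cup\{\alpha_0\}$ hold simultaneously.

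Once the finite-compatibility hypothesis is checked, Lemma~\ref{lem:HLO} delivers a single rational convex block subsequence $(h_n)$ of~$(f_n)$ with $(h_n)\preceq(f_{n,\alpha})$ for every $\alpha<\kappa$. It then remains to promote each relation $(h_n)\preceq(f_{n,\alpha})$ into membership $(h_n)\in\cS_\alpha$. This is where hypothesis~(i) closes the argument: since $(f_{n,\alpha})\in\cS_\alpha$ and $(h_n)$ is eventually a rational convex block subsequence of $(f_{n,\alpha})$, condition~(i) gives $(h_n)\in\cS_\alpha$ for every $\alpha<\kappa$, hence $(h_n)\in\bigcap_{\alpha<\kappa}\cS_\alpha$. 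Because $(f_n)\in\cS$ was arbitrary, this proves the lemma.

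The main obstacle I anticipate is purely bookkeeping rather than conceptual: matching the precise meaning of $\preceq$ (``eventually a rational convex block subsequence'') against the stated hypotheses, in particular making sure that the downward-closure condition~(i) applies to sequences that are only \emph{eventually} rational convex block subsequences, and verifying transitivity of $\preceq$ so that the inductive refinement in the second step is legitimate. Both are elementary once one unwinds the definition of a rational convex block subsequence from the introduction, so no serious difficulty is expected; the substance of the lemma is entirely carried by Lemma~\ref{lem:HLO}.
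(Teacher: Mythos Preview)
There is a genuine gap in your verification of the finite-compatibility hypothesis of Lemma~\ref{lem:HLO}. You fix the sequences $(f_{n,\alpha})$ \emph{independently} at the outset, and then, in the inductive step, you pass from a common refinement for $F'$ to a further rational convex block subsequence lying in $\cS_{\alpha_0}$. That new sequence is indeed $\preceq (f_{n,\alpha})$ for every $\alpha\in F'$ by transitivity, and it lies in $\cS_{\alpha_0}$; but membership in $\cS_{\alpha_0}$ does \emph{not} imply the relation $\preceq (f_{n,\alpha_0})$ to the particular sequence you chose at the start. Hypothesis~(i) only says $\cS_{\alpha_0}$ is downward closed under $\preceq$; it gives no upward information. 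Concretely, if $(f_n)$ is a basis and $(f_{n,0})$, $(f_{n,1})$ happen to be block subsequences of $(f_n)$ with disjoint supports, then no sequence is simultaneously $\preceq(f_{n,0})$ and $\preceq(f_{n,1})$, so the hypothesis of Lemma~\ref{lem:HLO} fails outright for $F=\{0,1\}$. Your ``cleanest route'' paragraph does not escape this: the refinement you produce is $\preceq$ some element of $\cS_{\alpha_0}$, but not the pre-selected $(f_{n,\alpha_0})$.

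The paper avoids this obstruction by \emph{not} choosing the $(f_{n,\alpha})$ independently. Instead it builds them by transfinite induction so that $(f_{n,\beta})\preceq(f_{n,\alpha})$ whenever $\alpha\le\beta$: at each stage $\gamma$ one first applies Lemma~\ref{lem:HLO} (using $|\gamma|<\mathfrak p$) to get a common $\preceq$-refinement of all previously constructed $(f_{n,\alpha})$, and only then refines further via~(ii) to land in $\cS_\gamma$. With this nested chain the finite-compatibility hypothesis is automatic (take $\beta=\max F$), and a final application of Lemma~\ref{lem:HLO} to the whole chain yields the desired $(h_n)$. So the substance is not ``purely bookkeeping'': the transfinite construction, with Lemma~\ref{lem:HLO} invoked at every stage, is exactly what makes the argument go through.
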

\begin{proof}
Fix $(f_n)\in \cS$.  We will prove 
the existence of a rational convex block subsequence of~$(f_n)$ belonging to~$\bigcap_{\alpha<\kappa} \cS_\alpha$.

{\sc Step 1.} We will construct by transfinite induction a collection $\{(f_{n,\alpha}):\alpha<\kappa\}$ of rational
convex block subsequences of~$(f_n)$ with the following properties:
\begin{enumerate}
\item[($P_\alpha$)] $(f_{n,\alpha}) \in \cS_\alpha$ for all $\alpha<\kappa$.
\item[($Q_{\alpha,\beta}$)] $(f_{n,\beta})\preceq (f_{n,\alpha})$ whenever $\alpha\leq\beta <\kappa$.
\end{enumerate}
For $\alpha=0$, we just use~(ii) to select any rational convex block subsequence of~$(f_{n})$ belonging to~$\cS_0$.
Suppose now that $\gamma<\kappa$ and that we have already constructed 
a collection $\{(f_{n,\alpha}):\alpha<\gamma\}$ of rational convex block subsequences of~$(f_n)$
such that ($P_\alpha$) and ($Q_{\alpha,\beta}$) hold whenever $\alpha\leq\beta<\gamma$. 
Since $|\gamma| < \mathfrak{p}$, Lemma~\ref{lem:HLO} ensures the existence
of a rational convex block subsequence~$(f_{n,\gamma})$ of~$(f_{n})$ 
such that $(f_{n,\gamma})\preceq (f_{n,\alpha})$ for all $\alpha<\gamma$.
Property~(i) yields $(f_{n,\gamma})\in \bigcap_{\alpha<\gamma}\cS_\alpha \sub \cS$. Now property~(ii)
implies that, by passing to a further rational convex block subsequence, we can assume
that ($P_\gamma$) holds. Clearly, ($Q_{\alpha,\beta}$) also holds for every $\alpha\leq \beta \leq \gamma$.

{\sc Step 2.} Since $\kappa<\mathfrak{p}$, another appeal to Lemma~\ref{lem:HLO} allows
us to extract a rational convex block subsequence $(h_n)$ of~$(f_n)$
such that $(h_n) \preceq (f_{n,\alpha})$ for every $\alpha<\kappa$. In particular,
$(h_n)\in \cS_\alpha$ for all $\alpha<\kappa$ (by~(i)).
\end{proof}

\begin{lem}\label{lem:DFJP}
Let $X$ be a Banach space and $L \sub X$ a weakly compact set. If $(x_n^*)$ is a $w^*$-convergent
sequence in~$X^*$, then there is a convex block subsequence of~$(x_n^*)$ which converges uniformly on~$L$.
\end{lem}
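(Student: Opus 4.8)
The plan is to reduce the statement to Mazur's theorem applied inside the Banach space $C(K)$, where $K$ denotes the set $L$ endowed with the weak topology of~$X$. First I would normalize the situation: writing $x^*$ for the $w^*$-limit of~$(x_n^*)$ and replacing $x_n^*$ by $x_n^*-x^*$, I may assume that $(x_n^*)$ is $w^*$-null. This is harmless because $\sum_{n\in I_k}a_n=1$ for a convex block subsequence, so if $\sum_{n\in I_k}a_n(x_n^*-x^*)$ converges uniformly on~$L$, then $\sum_{n\in I_k}a_n x_n^*$ converges uniformly on~$L$ as well (to the restriction of~$x^*$). By the uniform boundedness principle $(x_n^*)$ is bounded.

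Next I would set up the transfer to a $C(K)$-space. The set $K:=(L,w)$ is a compact Hausdorff space, since $L$ is weakly compact and the weak topology is Hausdorff. Every functional $x^*\in X^*$ is weakly continuous, so its restriction $x^*|_L$ belongs to~$C(K)$, and the restriction map $R:X^*\to C(K)$, $R(x^*)=x^*|_L$, is linear. Uniform convergence on~$L$ of a sequence in~$X^*$ is precisely norm convergence in~$C(K)$ of its image under~$R$. Since $R$ is linear, $R(\sum_{n\in I_k}a_n x_n^*)=\sum_{n\in I_k}a_n R(x_n^*)$, so it suffices to find a convex block subsequence of~$(R(x_n^*))$ that is norm convergent in~$C(K)$; pulling the same coefficients back along~$R$ then yields the desired convex block subsequence of~$(x_n^*)$.

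The key step is to observe that $(R(x_n^*))$ is weakly null in~$C(K)$. Indeed, this sequence is bounded and converges to~$0$ pointwise on~$K$, because $(x_n^*)$ is $w^*$-null and hence $x_n^*(x)\to 0$ for every $x\in L$. By the Riesz representation theorem the dual of~$C(K)$ consists of the regular Borel measures on~$K$, and for any such measure~$\mu$ the Lebesgue dominated convergence theorem gives $\int R(x_n^*)\,d\mu\to 0$; therefore $R(x_n^*)\to 0$ weakly. This is the place where the argument genuinely uses the weak compactness of~$L$, so that $K$ is compact and the passage from pointwise to weak convergence via dominated convergence is available; I expect this to be the main point of the proof.

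Finally I would invoke Mazur's theorem in the form recalled in the introduction: a weakly convergent sequence in a Banach space admits a convex block subsequence which is norm convergent. Applying it to the weakly null sequence $(R(x_n^*))$ in~$C(K)$ produces finite sets $I_k$ and non-negative scalars $(a_n)$ with $\sum_{n\in I_k}a_n=1$ such that $\sum_{n\in I_k}a_n R(x_n^*)$ converges to~$0$ in the norm of~$C(K)$. By the remarks above, $y_k^*:=\sum_{n\in I_k}a_n x_n^*$ is then a convex block subsequence of~$(x_n^*)$ converging uniformly on~$L$, which completes the proof.
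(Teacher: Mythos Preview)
Your argument is correct. Both proofs reduce the statement to Mazur's theorem, but in different ambient spaces. The paper invokes the Davis--Figiel--Johnson--Pe\l czy\'nski factorization theorem: it produces a reflexive space~$Y$ and an operator $T:Y\to X$ with $L\sub T(B_Y)$, observes that $(T^*(x_n^*))$ is $w^*$-null (hence weakly null) in the reflexive space~$Y^*$, and applies Mazur there; uniform convergence on~$L$ then follows from $L\sub T(B_Y)$. Your route instead embeds the problem into $C(K)$ with $K=(L,w)$ and uses Riesz representation together with dominated convergence to upgrade pointwise convergence to weak convergence before invoking Mazur. Your version is more elementary, avoiding the DFJP machinery altogether and making the role of weak compactness of~$L$ very transparent (it gives compactness of~$K$ and hence the applicability of dominated convergence). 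The paper's version, on the other hand, is a one-line application once DFJP is taken for granted, and it fits naturally with the surrounding material, which repeatedly exploits operators and duality rather than function-space embeddings.
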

\begin{proof}
We can suppose without loss of generality that $(x_n^*)$ is $w^*$-null.
By the Davis-Figiel-Johnson-Pe\l czy\'{n}ski factorization theorem (see e.g. \cite[Theorem~13.22]{fab-ultimo}),
there exist a reflexive Banach space~$Y$ and a linear continuous map~$T:Y\to X$ in such a way that $L \sub T(B_Y)$.
Since $T^*:X^*\to Y^*$ is $w^*$-$w^*$-continuous, $(T^*(x_n^*))$ is $w^*$-null in~$Y^*$.
Since $Y$ is reflexive, there is a convex block subsequence~$(z_n^*)$ of~$(x_n^*)$ such that
$\|T^*(z_n^*)\|_{Y^*}\to 0$. Bearing in mind that
$$
	\sup_{x\in L}|z_n^*(x)| \leq \sup_{y\in B_Y}|z_n^*(T(y))|=\|T^*(z_n^*)\|_{Y^*}
	\quad\mbox{for all }n\in \N,
$$
we conclude that $(z_n^*)$ converges to~$0$ uniformly on~$L$.
\end{proof}

\begin{rem}\label{rem:K-stronggeneration}
Let $X$ be a Banach space and $\cG$ a family of weakly compact subsets of~$X$ which strongly generates~$X$. Then
a bounded sequence $(x_n^*)$ in~$X^*$ is $\mu(X^*,X)$-null if and only if it
converges to~$0$ uniformly on any element of~$\cG$.
\end{rem}
\begin{proof} We only have to check the ``if'' part. 
To this end, assume without loss of generality that $x_n^* \in B_{X^*}$ for all $n\in \N$. 
Take any weakly compact set $L\sub X$ and $\epsilon>0$. Then there is
$G\in \cG$ such that $L \sub G+\epsilon B_X$. Therefore
$$
	\sup_{x\in L}|x_n^*(x)| \leq
	\sup_{x\in G}|x_n^*(x)|+\epsilon \leq 2\epsilon
$$
for large enough~$n$.
\end{proof}

\begin{proof}[Second proof of Theorem~\ref{theo:K-p}]
Write $\kappa:=SG(X)<\mathfrak{p}$. Let $\mathcal{G}$ be a family of weakly
compact subsets of~$X$ which strongly generates~$X$ and such that $|\mathcal{G}|=\kappa$. 
Enumerate $\mathcal{G}=\{G_\alpha:\alpha<\kappa\}$.
Let $\cS$ be the set of all $w^*$-null linearly independent sequences in~$X^*$ and,
for each $\alpha<\kappa$, let $\cS_\alpha \sub \cS$ be the set consisting of all sequences of~$\cS$
which converge uniformly on~$G_\alpha$. Conditions~(i) and~(ii) of Lemma~\ref{lem:corHLO} are fulfilled 
for this choice of~$\cS$ and~$\cS_\alpha$ (note that (ii) follows from Lemma~\ref{lem:DFJP}). 

Now, take any $(x_n^*)\in \cS$. By Lemma~\ref{lem:corHLO}, $(x_n^*)$ 
admits a convex block subsequence~$(z_n^*)$ converging uniformly on each~$G_\alpha$. 
Since $\{G_\alpha:\alpha<\kappa\}$ strongly generates~$X$, 
the sequence $(z_n^*)$ is $\mu(X^*,X)$-null (Remark~\ref{rem:K-stronggeneration}).
This proves that $X$ has property~(K).
\end{proof}

To deal with Theorem~\ref{theo:l1sums} below
we need the following fact, which is folklore and can be deduced, for instance, 
by an argument similar to that of \cite[p.~104, Theorem~4]{die-uhl-J} (cf. \cite[Lemma~7.2]{kac-alt}).

\begin{fact}\label{fact:ProductTopology}
Let $X:=(\bigoplus_{i\in I}X_i)_{\ell^1}$ be the $\ell^1$-sum of
a collection $\{X_i\}_{i\in I}$ of Banach spaces. For each $i\in I$, let
$\pi_i:X \to X_i$ be the $i$th-coordinate projection. Then
for every weakly compact set $L \sub X$ and $\epsilon>0$ there is a finite set $I_0 \sub I$ such that
$$
	\sum_{i \in I \setminus I_0}\|\pi_i(x)\|_{X_i} \leq \epsilon \quad
	\mbox{for all }x\in L.
$$
\end{fact}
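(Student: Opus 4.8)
The plan is to argue by contradiction, producing from a failure of the conclusion a weakly null sequence in $X$ that is \emph{normed} by a single bounded functional, which is impossible. Suppose then that $L \sub X$ is weakly compact and that for some $\epsilon>0$ no finite set works: for every finite $I_0 \sub I$ there is $x \in L$ with $\sum_{i \in I \setminus I_0} \|\pi_i(x)\|_{X_i} > \epsilon$. The engine of the proof is the identification $X^* = (\bigoplus_{i\in I} X_i^*)_{\ell^\infty}$, which means that \emph{norm-one functionals living on pairwise disjoint sets of coordinates assemble into a single functional of norm $\le 1$}. This is what will let me sum up local witnesses into one global $y^*$.

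First I would build, by induction on $k$, points $x_k \in L$, pairwise disjoint finite blocks $B_k \sub I$, and functionals $x_k^* \in X^*$ supported on $B_k$ with $\|x_k^*\|_{X^*} \le 1$. Keeping a running finite set $H_{k-1} = \bigcup_{j<k} G_j$, I apply the failure hypothesis to $H_{k-1}$ to obtain $x_k \in L$ whose mass outside $H_{k-1}$ exceeds $\epsilon$; since $\sum_i \|\pi_i(x_k)\|_{X_i}<\infty$, I may pick a finite $G_k \supseteq H_{k-1}$ capturing all but $\delta_k$ of that mass, where $\sum_k \delta_k$ is as small as I like. Setting $B_k := G_k \setminus H_{k-1}$ gives $\sum_{i\in B_k}\|\pi_i(x_k)\|_{X_i} > \epsilon - \delta_k > \epsilon/2$, and choosing on each (finite) block nearly norming functionals $\phi_i \in X_i^*$ yields $x_k^*$ with $\langle x_k^*, x_k\rangle > \epsilon/4 =: \theta$. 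The key quantitative gain from this bookkeeping is that the \emph{later} functionals barely see an \emph{earlier} point: for fixed $k$ the blocks $B_j$ with $j>k$ are disjoint and avoid $G_k$, so $\sum_{j>k}|\langle x_j^*, x_k\rangle| \le \sum_{i \notin G_k}\|\pi_i(x_k)\|_{X_i} < \delta_k$.

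The remaining difficulty, which I expect to be the \textbf{main obstacle}, is that this construction gives no control over the \emph{earlier} functionals applied to a \emph{later} point, i.e.\ over the masses of $x_k$ on the fixed blocks $B_j$ with $j<k$. To defeat this I would pass, via Eberlein--\v{S}mulian, to a weakly convergent subsequence $x_k \to x$ (relabelling) and recenter: the sequence $u_k := x_k - x$ is weakly null, and since $x \in X$ has summable coordinates and the $B_k$ are disjoint, the masses of $x$ on $B_k$ tend to $0$; hence $\langle x_k^*, u_k\rangle > \theta/2 =: \theta'$ for large $k$, while for each \emph{fixed} $j$ one has $\langle x_j^*, u_k\rangle \to 0$ as $k\to\infty$ by weak nullity. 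All the earlier inequalities survive passage to a subsequence.

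Finally I would run a gliding-hump selection $k_1 < k_2 < \cdots$ of the relabelled sequence so that the whole off-diagonal contribution stays below $\theta'/2$: the later-functional part is bounded by $\delta_{k_p} + \sum_{j>k_p}(\text{mass of }x\text{ on }B_j)$, which is small once $k_1$ is large; the earlier-functional part is a finite sum that I force to be small by choosing each $k_p$ late enough, using $\langle x_{k_q}^*, u_k\rangle \to 0$. Then $y^* := \sum_p x_{k_p}^*$ has $\|y^*\|_{X^*}\le 1$ (disjoint supports) yet $\langle y^*, u_{k_p}\rangle > \theta' - \theta'/2 = \theta'/2$ for all $p$, contradicting $u_{k_p} \to 0$ weakly. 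This contradiction proves the Fact.
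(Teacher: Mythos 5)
The paper does not prove this Fact; it only records it as folklore and points to the argument in Diestel--Uhl (p.~104, Theorem~4), which is exactly the kind of disjointification-plus-gliding-hump contradiction you carry out (there via Rosenthal's lemma, here via recentering at the weak limit of a subsequence). Your argument is correct and complete: the blocks $B_k$ are pairwise disjoint so $y^*=\sum_p x_{k_p}^*$ is a legitimate norm-one element of $(\bigoplus_i X_i^*)_{\ell^\infty}$, the diagonal terms stay bounded below, and both off-diagonal contributions are controlled as you describe, contradicting weak nullity of $(u_{k_p})$.
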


\begin{theo}\label{theo:l1sums}
Let $\{X_i\}_{i\in I}$ be a collection of Banach spaces having property~(K). If $|I|<\mathfrak{p}$,
then $(\bigoplus_{i\in I}X_i)_{\ell^1}$ has property~(K).
\end{theo}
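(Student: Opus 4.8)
Write $X = (\bigoplus_{i\in I}X_i)_{\ell^1}$ with $|I| < \mathfrak{p}$, and note that $X^* = (\bigoplus_{i\in I}X_i^*)_{\ell^\infty}$ with coordinate projections that are $w^*$-$w^*$-continuous. The plan is to use the machinery of Lemma~\ref{lem:corHLO}, exactly as in the second proof of Theorem~\ref{theo:K-p}. Let $\cS$ be the set of all $w^*$-null linearly independent sequences in~$X^*$. The idea is to index a family $\{\cS_i : i\in I\}$ by the summands themselves (so $\kappa = |I| < \mathfrak{p}$), where $\cS_i$ captures "good behavior in the $i$th coordinate," intersect over all $i$ via Lemma~\ref{lem:corHLO}, and then argue that a sequence lying in every $\cS_i$ is already $\mu(X^*,X)$-null because $I$ controls the whole space through Fact~\ref{fact:ProductTopology}.

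**Defining the $\cS_i$ and the diagonal step.**
For each $i\in I$, let $\cS_i \sub \cS$ consist of those sequences $(x_n^*)\in\cS$ whose $i$th coordinate sequence $(\pi_i(x_n^*))$ admits a $\mu(X_i^*,X_i)$-null rational convex block subsequence \emph{induced by the same blocks as $(x_n^*)$}; more precisely, I will define $\cS_i$ as the set of $(x_n^*)\in\cS$ such that $(\pi_i(x_n^*))$ is itself $\mu(X_i^*,X_i)$-null. First I would verify condition~(i): if $(h_n)\preceq(x_n^*)$ and $(\pi_i(x_n^*))$ is $\mu(X_i^*,X_i)$-null, then since $\pi_i$ is linear and maps convex blocks to convex blocks, $(\pi_i(h_n))$ is a convex block subsequence of a $\mu$-null sequence, hence $\mu$-null (here one uses~\eqref{eqn:convexMackey} together with the fact that passing to convex blocks of a $\mu$-null sequence stays $\mu$-null). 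Condition~(ii) is where property~(K) of each summand enters: given $(x_n^*)\in\cS$, the sequence $(\pi_i(x_n^*))$ is $w^*$-null in $X_i^*$ (since $\pi_i$ is $w^*$-$w^*$-continuous), so by property~(K) of $X_i$ it admits a $\mu(X_i^*,X_i)$-null convex block subsequence, and pulling those blocks back gives a rational convex block subsequence of $(x_n^*)$ lying in $\cS_i$. Lemma~\ref{lem:corHLO} then produces, for any $(x_n^*)\in\cS$, a single convex block subsequence $(z_n^*)$ belonging to $\bigcap_{i\in I}\cS_i$, i.e.\ with $(\pi_i(z_n^*))$ being $\mu(X_i^*,X_i)$-null \emph{simultaneously for every} $i\in I$.

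**From coordinatewise control to $\mu$-nullity.**
It remains to show that such a $(z_n^*)$ is $\mu(X^*,X)$-null. Fix a weakly compact $L\sub X$ and $\epsilon>0$. By Fact~\ref{fact:ProductTopology} there is a finite $I_0\sub I$ with $\sum_{i\in I\setminus I_0}\|\pi_i(x)\|_{X_i}\le\epsilon$ for all $x\in L$. For each $i\in I_0$ the set $\pi_i(L)$ is weakly compact in $X_i$, so $(\pi_i(z_n^*))$ converges to~$0$ uniformly on $\pi_i(L)$; since $I_0$ is finite, the total contribution of the coordinates in $I_0$ tends to~$0$ uniformly on~$L$. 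The tail coordinates contribute at most $\sup_n\|z_n^*\|_{X^*}\cdot\epsilon$, using the $\ell^\infty$ structure of the dual norm together with the $\ell^1$-tail estimate. Combining the two gives $\limsup_n\sup_{x\in L}|\langle z_n^*,x\rangle|\le C\epsilon$ for a fixed constant, and letting $\epsilon\to0$ yields $\mu(X^*,X)$-nullity.

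**Main obstacle.**
The delicate point is the final coordinatewise-to-global estimate: one must be careful that uniform convergence of each $(\pi_i(z_n^*))$ on $\pi_i(L)$ does \emph{not} by itself control $\sup_{x\in L}\sum_i\langle\pi_i(z_n^*),\pi_i(x)\rangle$, since the projections $\pi_i(x)$ are correlated through a single $x\in L$. The role of Fact~\ref{fact:ProductTopology} is precisely to reduce the infinite sum to a uniformly small tail plus a finite sum; without the cardinality bound $|I|<\mathfrak{p}$ (entering through Lemma~\ref{lem:corHLO}) one could not even produce a single block subsequence good in all coordinates at once, which is exactly the phenomenon behind Pe\l czy\'{n}ski's counterexample for $\mathfrak{c}$ copies of $L^1[0,1]$.
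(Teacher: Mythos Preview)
Your proof is correct and follows essentially the same route as the paper's: both define $\cS_i$ as the sequences whose $i$th coordinate projection is already $\mu(X_i^*,X_i)$-null, invoke Lemma~\ref{lem:corHLO} (with $\kappa=|I|<\mathfrak p$) to obtain a single convex block subsequence good in every coordinate, and then use Fact~\ref{fact:ProductTopology} for the coordinatewise-to-global estimate. The only cosmetic difference is that the paper packages this last step via Remark~\ref{rem:K-stronggeneration} and the strongly generating family of finitely supported weakly compact sets, whereas you carry out the same splitting into a finite block plus an $\ell^1$-small tail directly.
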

\begin{proof} For notational convenience, we assume that the index set~$I$ is a cardinal, say~$\kappa$.
Write $X:=(\bigoplus_{\alpha<\kappa}X_\alpha)_{\ell^1}$ and identify
$X^*=(\bigoplus_{\alpha<\kappa}X^*_\alpha)_{\ell^\infty}$. Denote by 
$\pi_\alpha: X \to X_\alpha$ and $\rho_\alpha: X^* \to X^*_\alpha$
the $\alpha$th-coordinate projections for every $\alpha<\kappa$. Let 
$\mathcal{G}$ be the family of all weakly compact sets $G \sub X$ for which there is a finite
set $I_G \sub \kappa$ such that $\pi_\alpha(G)=\{0\}$ for every $\alpha\in \kappa\setminus I_G$.
From Fact~\ref{fact:ProductTopology} it follows that $\cG$ strongly generates~$X$.
Therefore, in order to prove that $X$ has property~(K)
it suffices to check that any $w^*$-null linearly independent sequence in~$X^*$
admits a convex block subsequence which converges uniformly on any element of~$\cG$
(Remark~\ref{rem:K-stronggeneration}). 

Let $\cS$ be the set of all $w^*$-null linearly independent sequences in~$X^*$ and,
for each $\alpha<\kappa$, let $\cS_\alpha \sub \cS$ be the set of all $(x_n^*)\in \cS$ such that
$(\rho_\alpha(x_n^*))$ is~$\mu(X_\alpha^*,X_\alpha)$-null. Conditions~(i) and~(ii) of Lemma~\ref{lem:corHLO} are satisfied
for this choice ((i) is immediate and (ii) holds because $\rho_\alpha$ is $w^*$-$w^*$-continuous
and $X_\alpha$ has property~(K)).
Therefore, every $(x_n^*) \in \cS$ admits a convex block subsequence $(y_n^*)$
such that $(\rho_\alpha(y_n^*))$ is $\mu(X_\alpha^*,X_\alpha)$-null for every $\alpha<\kappa$, which clearly implies that
$(y_n^*)$ converges to~$0$ uniformly on each element of~$\cG$. The proof is complete.
\end{proof}

On the other hand, property~(K) is stable under $\ell^p$-sums whenever $1<p<\infty$, without any restriction
on the cardinality of the family:

\begin{theo}\label{theo:lpsums}
Let $\{X_i\}_{i\in I}$ be a collection of Banach spaces having property~(K).
Then $(\bigoplus_{i\in I}X_i)_{\ell^p}$ has property~(K) for any $1<p<\infty$.
\end{theo}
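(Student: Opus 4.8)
The plan is to exploit the feature that distinguishes $\ell^p$-sums with $1<p<\infty$ from $\ell^1$-sums, and thereby dispose of any cardinality restriction on~$I$. Write $X:=(\bigoplus_{i\in I}X_i)_{\ell^p}$ and identify $X^*=(\bigoplus_{i\in I}X_i^*)_{\ell^q}$, where $\frac1p+\frac1q=1$, so that $1<q<\infty$. The decisive observation is that every $x^*\in X^*$ has \emph{countable support}: since $\sum_i\|\rho_i(x^*)\|_{X_i^*}^q<\infty$, only countably many coordinates are nonzero. Hence, given a $w^*$-null sequence $(x_n^*)$ in~$X^*$, the countable set $J:=\bigcup_n\{i:\rho_i(x_n^*)\neq 0\}\sub I$ carries the whole sequence. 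Since $X_J:=(\bigoplus_{i\in J}X_i)_{\ell^p}$ is $1$-complemented in~$X$ (via the coordinate projection~$P_J$, which is weak-weak continuous and so maps weakly compact sets to weakly compact sets) and $(x_n^*)$ lives in $X_J^*\sub X^*$, a convex block subsequence that is $\mu(X_J^*,X_J)$-null will automatically be $\mu(X^*,X)$-null, because $\sup_{x\in L}|\langle x^*,x\rangle|=\sup_{u\in P_J(L)}|\langle x^*,u\rangle|$ for functionals supported in~$J$. So it suffices to prove property~(K) when $I$ is countable, say $I=\N$; note that this reduction fails for $p=1$, which is precisely why that case requires $|I|<\mathfrak{p}$.

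Assume now $I=\N$ and fix a $w^*$-null sequence $(x_n^*)$ in~$X^*$, bounded by~$M$. Let $P_{\le m}$ and $P_{>m}$ denote the projections of~$X^*$ (and of~$X$) onto the first $m$ and the remaining coordinates. For any weakly compact $L\sub X$ with $R:=\sup_{x\in L}\|x\|_X$, splitting the pairing into head and tail and applying H\"older on the tail yields, for every convex block subsequence $(y_k^*)$ of~$(x_n^*)$, the estimate $\sup_{x\in L}|\langle y_k^*,x\rangle|\le \sup_{u\in P_{\le m}(L)}|\langle P_{\le m}(y_k^*),u\rangle|+R\,\|P_{>m}(y_k^*)\|_{X^*}$; the tail term uses $q<\infty$ in an essential way. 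This reduces the task to producing a convex block subsequence $(y_k^*)$ satisfying two conditions: \emph{(a)} for each fixed $i\in\N$ the sequence $(\rho_i(y_k^*))_k$ is $\mu(X_i^*,X_i)$-null, and \emph{(b)} $\lim_{m}\sup_k\|P_{>m}(y_k^*)\|_{X^*}=0$. Indeed, (a) forces the head term to tend to~$0$ for each fixed~$m$, since $P_{\le m}(L)$ is contained in the weakly compact box $\prod_{i\le m}\overline{\pi_i(L)}^{\,w}$ and $\sup_{u\in P_{\le m}(L)}|\langle P_{\le m}(y_k^*),u\rangle|\le\sum_{i\le m}\sup_{u_i\in\overline{\pi_i(L)}^{\,w}}|\langle\rho_i(y_k^*),u_i\rangle|$, each summand tending to~$0$; meanwhile (b) makes the tail term uniformly small. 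A standard interchange of limits then gives $\mu(X^*,X)$-nullity.

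It remains to secure (a) and (b) \emph{together}, which is the main obstacle. The key point is that (b) is hereditary: if $(y_k^*)$ satisfies it, so does every further convex block subsequence, because the $\ell^q$-tail of a convex combination is at most the supremum of the tails. I would therefore secure (b) first and then refine to obtain (a). For (b), consider the norm profiles $b_n:=(\|\rho_i(x_n^*)\|_{X_i^*})_i\in\ell^q$, a bounded sequence in the \emph{reflexive} space~$\ell^q$; passing to a subsequence I may assume $(b_n)$ converges weakly, and the usual gliding-hump refinement of Mazur's theorem then produces a convex block subsequence of~$(x_n^*)$ whose profiles converge in $\ell^q$-norm, hence are uniformly summable. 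Since $\|\rho_i(y_k^*)\|\le\sum_{n\in I_k}c_n\|\rho_i(x_n^*)\|$ coordinatewise, the resulting convex block subsequence $(y_k^*)$ inherits uniformly small $\ell^q$-tails, i.e.\ satisfies (b). Passing to a linearly independent subsequence (or else $(y_k^*)$ spans a finite-dimensional space, on which $w^*$-null means norm-null and we are done), I then apply Lemma~\ref{lem:corHLO} with $\kappa=\omega<\mathfrak{p}$, taking $\cS$ to be the set of $w^*$-null linearly independent sequences and, for each $i\in\N$, letting $\cS_i$ consist of those whose $i$-th coordinate is $\mu(X_i^*,X_i)$-null. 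Conditions~(i) and~(ii) of the lemma hold because each $\rho_i$ is $w^*$-$w^*$-continuous and $X_i$ has property~(K), so the lemma delivers a further convex block subsequence lying in $\bigcap_i\cS_i$, that is, satisfying (a). Being a convex block subsequence of the one built for (b), it still satisfies (b), and is therefore $\mu(X^*,X)$-null. This completes the argument; observe that $1<p<\infty$ entered exactly twice, through the countable support ($q<\infty$) and through the reflexivity of~$\ell^q$.
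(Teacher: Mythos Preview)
Your proof is correct and follows essentially the same strategy as the paper: reduce to countable~$I$ via the countable-support observation, then combine a coordinatewise application of Lemma~\ref{lem:corHLO} (using property~(K) of each~$X_i$) with a tail-control step based on the reflexivity of~$\ell^q$ applied to the norm profiles $(\|\rho_i(x_n^*)\|)_i$. The only difference is the order in which the two refinements are carried out: the paper first passes to a convex block subsequence that is coordinatewise Mackey-null and then controls the $\ell^q$-tails, whereas you secure the uniform tail bound first and observe it is hereditary under further convex blocking before applying Lemma~\ref{lem:corHLO}; both orders work because each of the two properties is preserved under further convex block subsequences.
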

\begin{proof}
Write $X:=(\bigoplus_{i\in I}X_i)_{\ell^p}$ and identify $X^*=(\bigoplus_{i\in I}X^*_i)_{\ell^q}$, where $1<q<\infty$ satisfies
$\frac{1}{p}+\frac{1}{q}=1$. Since every element of~$X^*$ is countably supported, we can assume without loss of generality
that $I$ is {\em countable}. Let $(x_n^*)$ be a $w^*$-null sequence in~$X^*$, that is, $(x_n^*)$ is bounded and for every
$i\in I$ the sequence $(\rho_i(x_n^*))$ is $w^*$-null in~$X_i^*$, where $\rho_i: X^*\to X_i^*$ denotes the $i$th-coordinate projection.
Assume without loss of generality that $(x_n^*)$ is linearly independent.
Since any $w^*$-null sequence in~$X_i^*$ admits a rational convex block subsequence which is $\mu(X_i^*,X_i)$-null
(because $X_i$ has property~(K)), an appeal to Lemma~\ref{lem:corHLO} (in the countable case) allows us to extract a convex block subsequence
of~$(x_n^*)$, not relabeled, such that for every $i\in I$ the sequence $(\rho_i(x_n^*))$ is $\mu(X_i^*,X_i)$-null.

Define $z_n:=(\|\rho_i(x_n^*)\|_{X_i^*})_{i\in I}\in \ell^q(I)$ for all $n\in \Nat$, so that $\|z_n\|_{\ell^q(I)}=\|x_n^*\|_{X^*}$.
Since $(z_n)$ is a bounded sequence in the reflexive space~$\ell^q(I)$, it admits a norm convergent convex block subsequence, that is,
there exist $z\in \ell^q(I)$, a sequence $(I_k)$ of finite subsets of~$\N$ with $\max(I_k) < \min(I_{k+1})$ and a sequence $(a_n)$
of non-negative real numbers with $\sum_{n\in I_k}a_n=1$ for all $k\in \N$, in such a way that  
$$
	\lim_{k\to \infty}\Big\|\sum_{n\in I_k}a_n z_n-z\Big\|_{\ell^q(I)} = 0.
$$
Define a convex block subsequence $(y_k^*)$ of~$(x_n^*)$ by
$$
	y_k^*:=\sum_{n\in I_k}a_n x_n^* \quad \mbox{for all }k\in \N.
$$
We will check that $(y_k^*)$ is $\mu(X^*,X)$-null.

To this end, fix any weakly compact set $L \sub X$ and $\epsilon>0$. We can assume that $L \sub B_X$.
There is a finite set $J \sub I$ such that
\begin{equation}\label{eqn:cola-z}
	\Big(\sum_{i\in I\setminus J} |\tilde{\rho}_i(z)|^q\Big)^{\frac{1}{q}} \leq \epsilon,
\end{equation}
where $\tilde{\rho}_i$ denotes the $i$th-coordinate functional on~$\ell^q(I)$. Choose $k_0\in \Nat$ 
large enough such that 
$$
	\Big\|\sum_{n\in I_k}a_n z_n-z\Big\|_{\ell^q(I)}\leq \epsilon
	\quad\mbox{for all }k> k_0.
$$
Then~\eqref{eqn:cola-z} yields
\begin{equation}\label{eqn:cola-w}
	\sup_{k> k_0} \Big(\sum_{i\in I\setminus J} \Big|\tilde{\rho}_i\Big(\sum_{n\in I_k}a_n z_n\Big)\Big|^q\Big)^{\frac{1}{q}} \leq 2\epsilon.
\end{equation}
Bearing in mind that 
$$
	\|\rho_i(y_k^*)\|_{X_i^*} \leq
	\sum_{n\in I_k}a_n\|\rho_i(x_n^*)\|_{X_i^*}=\tilde{\rho}_i\Big(\sum_{n\in I_k} a_n z_n\Big)
$$
for every $i\in I$ and $k\in \Nat$, from~\eqref{eqn:cola-w} we conclude that
$$	
	\sup_{k> k_0} \Big(\sum_{i\in I\setminus J} \|\rho_i(y_k^*)\|_{X_i^*}^q\Big)^{\frac{1}{q}} \leq 2\epsilon.
$$
H\"{o}lder's inequality applied to $(\|\rho_i(y_k^*)\|_{X_i^*})_{i\in I}\in \ell^q(I)$ now yields
\begin{equation}\label{eqn:cola-ww}
	\sup_{k> k_0} \, \sum_{i\in I \setminus J} |b_i| \cdot \|\rho_i(y_k^*)\|_{X_i^*} \leq  2\epsilon
	\quad \mbox{for every }(b_i)_{i\in I}\in B_{\ell^p(I)}.
\end{equation}

For each $i\in I$, the sequence $(\rho_i(x_n^*))$ is $\mu(X_i^*,X_i)$-null and so the same holds for its convex block subsequence $(\rho_i(y_k^*))$.
In particular, $(\rho_i(y_k^*))$ converges to~$0$ uniformly on the weakly compact set $\pi_i(L) \sub X_i$,
where $\pi_i:X \to X_i$ denotes the $i$th-coordinate projection. Since $J$ is finite,
we can find $k_1>k_0$ such that
\begin{equation}\label{eqn:yk-coordinates}
	 \big|\langle \rho_i(y_k^*),\pi_i(x) \rangle \big| \leq \frac{\epsilon}{|J|}
	 \quad\mbox{for every }k>k_1, \, i\in J \mbox{ and }x\in L.
\end{equation}
By putting together~\eqref{eqn:cola-ww} and~\eqref{eqn:yk-coordinates}, 
for every $k>k_1$ and $x\in L \sub B_X$ we get
\begin{multline*}
	\big|\langle y_k^*,x \rangle \big| \leq
	\sum_{i\in I} \big|\langle \rho_i(y_k^*),\pi_i(x) \rangle \big|\leq 
	\epsilon + \sum_{i\in I\setminus J} \big|\langle \rho_i(y_k^*),\pi_i(x) \rangle \big| \\
	\leq \epsilon + \sum_{i\in I\setminus J} \|\pi_i(x)\|_{X_i} \cdot \|\rho_i(y_k^*)\|_{X_i^*}  \leq
	3\epsilon.
\end{multline*}
This shows that $(y_k^*)$ is $\mu(X^*,X)$-null and the proof is finished.
\end{proof}

From the previous theorem it follows that $\ell^p(\ell^1)$ has property~(K) whenever $1<p<\infty$. However,
this space does not embed isomorphically into any SWCG Banach space, see \cite[Corollary~2.29]{kam-mer2}.

We finish the paper with some open questions:

\begin{problem}
Given a Banach space~$X$ and a subspace~$Y \sub X$,
does $X$ have property~(K) if both $Y$ and $X/Y$ have property~(K)?
\end{problem}

\begin{problem}
What is the ``optimal'' cardinal for Theorem~\ref{theo:l1sums} to work? 
\end{problem}

As we already mentioned in the introduction, the conclusion of Theorem~\ref{theo:l1sums} might fail
if $|I|=\mathfrak{b}$.

\begin{problem}
Let $X$ be a Banach space with unconditional basis not containing subspaces isomorphic to~$c_0$. 
Does $X$ have property~(K)? 
\end{problem}

A natural candidate to test the previous question is the Banach space $E_{1,2}(\mathcal{A})$ associated to
the adequate family~$\mathcal{A}$ of all chains of the dyadic tree, see e.g.~\cite{arg-mer}. This
space is not SWCG, see \cite[Example~2.6]{sch-whe} (cf. \cite[Example~2.9]{mer-sta-2}).

\subsection*{Acknowledgement}
The authors would like to thank G.~Plebanek for valuable discussions.

\providecommand{\MR}{\relax\ifhmode\unskip\space\fi MR }
\providecommand{\MRhref}[2]{%
  \href{http://www.ams.org/mathscinet-getitem?mr=#1}{#2}
}
\providecommand{\href}[2]{#2}


\begin{thebibliography}{10}

\bibitem{arg-mer}
S.~Argyros and S.~Mercourakis, \emph{On weakly {L}indel\"of {B}anach spaces},
  Rocky Mountain J. Math. \textbf{23} (1993), no.~2, 395--446. \MR{1226181
  (94i:46016)}

\bibitem{avi-ple-rod-5}
A.~Avil{\'e}s, G.~Plebanek, and J.~Rodr{\'{\i}}guez, \emph{Tukey classification
  of some ideals on $\omega$ and the lattices of weakly compact sets in
  {B}anach spaces}, preprint, arXiv:1406.5526v2.

\bibitem{bou:79}
J.~Bourgain, \emph{La propri\'{e}t\'{e} de {R}adon-{N}ikod\'{y}m}, Publ. Math.
  Univ. Pierre et Marie Curie \textbf{36} (1979).

\bibitem{cas-gon}
J.~M.~F. Castillo and M.~Gonz{\'a}lez, \emph{Three-space problems in {B}anach
  space theory}, Lecture Notes in Mathematics, vol. 1667, Springer-Verlag,
  Berlin, 1997. \MR{1482801 (99a:46034)}

\bibitem{cas-gon-pap}
J.~M.~F. Castillo, M.~Gonz{\'a}lez, and P.~L. Papini, \emph{On {${\rm weak}\sp
  *$}-extensible {B}anach spaces}, Nonlinear Anal. \textbf{75} (2012), no.~13,
  4936--4941. \MR{2927556}

\bibitem{cem-men}
P.~Cembranos and J.~Mendoza, \emph{Banach spaces of vector-valued functions},
  Lecture Notes in Mathematics, vol. 1676, Springer-Verlag, Berlin, 1997.
  \MR{1489231 (99f:46049)}

\bibitem{die-alt2}
J.~Diestel, W.~M. Ruess, and W.~Schachermayer, \emph{On weak compactness in
  {$L\sp 1(\mu, X)$}}, Proc. Amer. Math. Soc. \textbf{118} (1993), no.~2,
  447--453. \MR{1132408 (93g:46033)}

\bibitem{die-uhl-J}
J.~Diestel and J.~J. Uhl, Jr., \emph{Vector measures}, Mathematical Surveys, No. 15, American Mathematical
  Society, Providence, R.I., 1977. \MR{0453964 (56 \#12216)}

\bibitem{fab-ultimo}
M.~Fabian, P.~Habala, P.~H{\'a}jek, V.~Montesinos, and V.~Zizler, \emph{Banach
  space theory. The basis for linear and nonlinear analysis}, 
	CMS Books in Mathematics/Ouvrages de Math\'ematiques de la SMC, Springer, New York, 2011. \MR{2766381 (2012h:46001)}

\bibitem{fig-alt2}
T.~Figiel, W.~B. Johnson, and A.~Pe{\l}czy{\'n}ski, \emph{Some approximation
  properties of {B}anach spaces and {B}anach lattices}, Israel J. Math.
  \textbf{183} (2011), 199--231. \MR{2811159}

\bibitem{fra-ple}
R.~Frankiewicz and G.~Plebanek, \emph{Convex combinations and weak{$\sp \ast$}
  null sequences}, Bull. Polish Acad. Sci. Math. \textbf{45} (1997), no.~3,
  221--225. \MR{1477539 (98i:46009)}

\bibitem{freMartin}
D.~H. Fremlin, \emph{Consequences of {M}artin's axiom}, Cambridge Tracts in
  Mathematics, vol.~84, Cambridge University Press, Cambridge, 1984. \MR{780933
  (86i:03001)}

\bibitem{hay-lev}
R.~Haydon, M.~Levy, and E.~Odell, \emph{On sequences without weak{$\sp \ast$}
  convergent convex block subsequences}, Proc. Amer. Math. Soc. \textbf{100}
  (1987), no.~1, 94--98. \MR{883407 (89d:46014)}

\bibitem{kac-alt}
M.~Ka{\v{c}}ena, O.~F.~K. Kalenda, and J.~Spurn{\'y}, \emph{Quantitative
  {D}unford-{P}ettis property}, Adv. Math. \textbf{234} (2013), 488--527.
  \MR{3003935}

\bibitem{kal-pel}
N.~J. Kalton and A.~Pe{\l}czy{\'n}ski, \emph{Kernels of surjections from
  {$\mathcal{L}_1$}-spaces with an application to {S}idon sets}, Math. Ann.
  \textbf{309} (1997), no.~1, 135--158. \MR{1467651 (98h:46013)}

\bibitem{kam-mer2}
K.~K. Kampoukos and S.~K. Mercourakis, \emph{On a certain class of
  {$\mathcal{K}_{\sigma\delta}$} {B}anach spaces}, Topology Appl. \textbf{160}
  (2013), no.~9, 1045--1060. \MR{3049252}

\bibitem{mer-sta-2}
S.~Mercourakis and E.~Stamati, \emph{A new class of weakly {K}-analytic
  {B}anach spaces}, Comment. Math. Univ. Carolin. \textbf{47} (2006), no.~2,
  291--312. \MR{2241533 (2007h:46021)}

\bibitem{pfi-J}
H.~Pfitzner, \emph{Boundaries for {B}anach spaces determine weak compactness},
  Invent. Math. \textbf{182} (2010), no.~3, 585--604. \MR{2737706
  (2012d:46050)}

\bibitem{sch-whe}
G.~Schl{\"u}chtermann and R.~F. Wheeler, \emph{On strongly {WCG} {B}anach
  spaces}, Math. Z. \textbf{199} (1988), no.~3, 387--398. \MR{961818
  (89k:46016)}

\bibitem{sch-3}
Th. Schlumprecht, \emph{On dual spaces with bounded sequences without weak{$\sp
  *$}-convergent convex blocks}, Proc. Amer. Math. Soc. \textbf{107} (1989),
  no.~2, 395--408. \MR{979052 (90b:46041)}

\bibitem{dou2}
E.~K. van Douwen, \emph{The integers and topology}, Handbook of set-theoretic
  topology, North-Holland, Amsterdam, 1984, pp.~111--167. \MR{776622
  (87f:54008)}

\bibitem{wan-alt}
B.~Wang, Y.~Zhao, and W.~Qian, \emph{On the weak-star extensibility}, Nonlinear
  Anal. \textbf{74} (2011), no.~6, 2109--2115. \MR{2781741 (2012c:46034)}

\end{thebibliography}
\bibliographystyle{amsplain}

\end{document}